\theoremstyle{plain}
\newtheorem{thm}{Theorem}[section]
\newtheorem{theorem}[thm]{Theorem}
\newtheorem{lemma}[thm]{Lemma}
\newtheorem{corollary}[thm]{Corollary}
\newtheorem{prop}[thm]{Proposition}
\theoremstyle{definition}
\newtheorem{rmk}[thm]{Remark}
\newtheorem{defn}[thm]{Definition}
\newtheorem{definition}[thm]{Definition}
\newtheorem{Definition-Proposition}[thm]{Definition-Proposition}
\newtheorem{construction}[thm]{Construction}
\newcommand{\Bir}{{\rm Bir}}
\renewcommand{\tilde}{\widetilde}
\newcommand{\A}{{\mathbb A}}
\newcommand{\C}{{\mathbb C}}
\newcommand{\G}{{\mathbb G}}
\renewcommand{\P}{{\mathbb P}}
\newcommand{\Q}{{\mathbb Q}}
\newcommand{\V}{{\mathbb V}}
\begin{document}
\title{on finiteness of {\bf B}-representations and semi-log canonical abundance}
\dedicatory{ Dedicated to Professor Shigefumi Mori on his 60th Birthday}
\author{Christopher D. Hacon}
\date{\today}
\address{Department of Mathematics \\
University of Utah\\
155 South 1400 East\\
JWB 233\\
Salt Lake City, UT 84112, USA}
\email{hacon@math.utah.edu}
\author{Chenyang Xu}
\address{Beijing International Center of Mathematics Research, 5 Yiheyuan Road, Haidian District, Beijing 100871, China}
\address{Department of Mathematics\\
University of Utah\\
155 South 1400 East\\
Salt Lake City, UT 84112, USA}
\email{dr.chenyang.xu@gmail.com}

\begin{abstract}
We give a new proof of the finiteness of {\bf B}-representations. As a consequence of the finiteness of {\bf B}-representations and Koll\'ar's gluing theory on lc centers, we prove that the (relative) abundance conjecture for slc pairs is implied by the abundance conjecture for log canonical pairs.
\end{abstract}
\thanks{We are indebted to Y. Gongyo, J. M$^{\rm c}$Kernan, J. Koll\'ar and R. Zhong for helpful conversations. Especially, the main ideas in Section 4 follow \cite{Kollar11b}. Part of this work was done while the second author was visiting RIMS, which he would like to thank for the hospitality. The first author was partially supported by NSF research grant no: 0757897, the second author was partially supported by NSF research grant no: 0969495.}
\maketitle


\tableofcontents

\section{Introduction}

Throughout this note, the ground field will be the field  $\C$ of complex numbers. It is well known that even though the log minimal model program is focused on the study of log pairs $(X,\Delta )$ where $X$ is a normal variety, for technical reasons it is often necessary to deal with log pairs $(X,\Delta )$ where $X$ is a semi-normal variety.
This naturally occurs in proofs by induction on the dimension where, for example, we restrict to the reduced part of the boundary of a dlt pair $(X,\Delta )$ (cf. e.g. \cite{Kollar92, KMM94, Birkar11, HX11, FG11}) or when we study moduli of pairs as normal varieties can degenerate to non-normal ones (cf. \cite{Kollar11, HX11}). 
In \cite{Fujino00}, O. Fujino first used the {\bf B}-representation to study semi-log canonical abundance conjecture and proved the conjecture in the 3 dimensional case. 
Recently, J. Koll\'ar has developed a useful technique for gluing log canonical centers (cf. \cite{Kollar11,Kollar11b}) that reduces many questions on semi-normal pairs to questions on their normalizations. 
An important result used in Koll\'ar's theory and in Fujino's work (cf. \cite{Fujino00}), is  the finiteness of {\bf B}-representations, which was first proved by Ueno in the klt case, and then generalized to the log canonical case by Fujino-Gongyo (cf. \cite{FG11}). Recall the following.

\begin{definition}Let $(X,\Delta)$ be a projective dlt pair. We define the birational automorphism group $\Bir(X,\Delta)$ of $(X,\Delta)$ to be the group 
of all birational maps $g$ of $X$ such that if we take a common resolution
\begin{diagram}
 && Y & &\\
 &\ldTo^p & &  \rdTo^q\\
 X& &\rDashto^g & &X
\end{diagram}
then $p^*(K_X+\Delta)=q^*(K_X+\Delta)$. We call the induced homomorphism
$$\rho_{m}:{\rm Bir}(X,\Delta)\to {\rm Aut}(H^0(X,\mathcal O_{X}(m(K_{X}+\Delta))))$$
the {\bf B-representation} of the pair $(X,\Delta )$. As far as we know, $\bf B$-birational maps and
$\bf B$-representations for general log pairs were first explicitly introduced in \cite{Fujino00}.
\end{definition}

In this note we first aim to give a new proof of the following result.

\begin{theorem}\label{t-fb} Given a projective dlt pair $(X,\Delta)$ such that $K_{X}+\Delta$ is a semi-ample $\mathbb{Q}$-divisor.
There exists $m\in \mathbb{N}$, such that the image of the {\bf B}-representation
$$\rho_{M}:{\rm Bir}(X,\Delta)\to {\rm Aut}(H^0(X,\mathcal O _{X}(M(K_{X}+\Delta))))$$
is finite for any positive integer $M$ divisible by $m$.
\end{theorem}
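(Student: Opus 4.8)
The plan is to reduce everything to the geometry of the ample model and then combine an analytic (metric) input with an arithmetic one. Since $K_X+\Delta$ is a semi-ample $\mathbb{Q}$-divisor, after replacing $m$ by a multiple I may assume $m(K_X+\Delta)$ is Cartier and base-point free; let $f\colon X\to Z$ be the associated morphism onto its image, the ample model, so that $K_X+\Delta\sim_{\mathbb{Q}}f^*A$ for an ample $\mathbb{Q}$-divisor $A$ on $Z$ and $H^0(X,\mathcal O_X(M(K_X+\Delta)))\cong H^0(Z,\mathcal O_Z(MA))=:V$ for every $M$ divisible by $m$. The $\mathbf B$-condition $p^*(K_X+\Delta)=q^*(K_X+\Delta)$ guarantees that each $g\in\Bir(X,\Delta)$ induces a graded automorphism of the canonical ring $R=\bigoplus_k H^0(X,\mathcal O_X(k(K_X+\Delta)))$; since $Z=\mathrm{Proj}\,R$ is intrinsic to $R$, this produces an automorphism $g_Z$ of $Z$ with $g_Z^*A\sim_{\mathbb{Q}}A$, and $\rho_M(g)$ is precisely the induced action on $V$. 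Hence the image of $\rho_M$ is contained in the image $G$ of the algebraic homomorphism $\Aut(Z,A)\to \GL(V)$; by Chevalley's theorem $G$ is a complex linear algebraic group, so the image of $\rho_M$ carries an algebraic structure.

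The analytic input is the canonical $L^{2/m}$-pseudonorm. For $s\in H^0(X,\mathcal O_X(m(K_X+\Delta)))$ one forms the measure $(s\wedge\bar s)^{1/m}$ on the smooth locus of $(X,\Delta)$ and sets $\|s\|=(\int_X (s\wedge\bar s)^{1/m})^{m/2}$; because a $\mathbf B$-birational map pulls $K_X+\Delta$ back to $K_X+\Delta$, it transports this measure to itself, so every $\rho_m(g)$ is an isometry of $\|\cdot\|$. If $\|\cdot\|$ were a genuine positive-definite Hermitian norm this would place the image of $\rho_m$ inside a compact unitary group, and the same would follow for every divisible $M$.

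The main obstacle is exactly that $\|\cdot\|$ need not be finite or nondegenerate: along the reduced boundary $\lfloor\Delta\rfloor$ the integrand $(s\wedge\bar s)^{1/m}$ has a non-integrable pole for sections that do not vanish on $\lfloor\Delta\rfloor$, so the naive integral diverges precisely on the subspace of sections restricting nontrivially to the lc centers. I expect to resolve this by induction on $\dim X$: by adjunction $(K_X+\Delta)|_{\lfloor\Delta\rfloor}=K_{\lfloor\Delta\rfloor}+\mathrm{Diff}$, the divergent directions are governed by the pluricanonical sections of the lower-dimensional dlt pair $(\lfloor\Delta\rfloor,\mathrm{Diff})$, on which finiteness of the $\mathbf B$-representation holds by the inductive hypothesis; this lets me split off the degenerate part and run the compactness argument on the genuinely klt directions. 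This adjunction/restriction step, compatible with Koll\'ar's treatment of lc centers, is the technical heart of the argument.

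Finally I upgrade relative compactness to finiteness by an arithmetic argument. The whole configuration descends to a subfield $k_0\subset\mathbb{C}$ finitely generated over $\mathbb{Q}$, so the eigenvalues of each $\rho_M(g)$ are algebraic over $k_0$. Isometry of $\|\cdot\|$ forces each eigenvalue to have absolute value $1$; since the pseudonorm is intrinsic, the same holds after applying any embedding of the relevant field into $\mathbb{C}$, so every archimedean absolute value of every eigenvalue equals $1$. By Kronecker's theorem the eigenvalues are roots of unity, and as they lie in a finitely generated extension of $\mathbb{Q}$—which contains only finitely many roots of unity—their orders are bounded independently of $g$. Hence the image of $\rho_M$ has bounded exponent, and a subgroup of $\GL(V)$ of bounded exponent is finite by Burnside's theorem, which gives the claim.
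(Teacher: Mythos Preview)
Your route is genuinely different from the paper's, and is essentially the Fujino--Gongyo strategy that the paper cites as an alternative. The paper does not touch the $L^{2/m}$-pseudonorm. After passing to the ample model $f\colon X\to Y$ it splits $\rho_M(\Bir(X,\Delta))$ as an extension of $\chi(\Bir(X,\Delta))\subset\Aut(Y)$ by a finite subgroup of $\C^*$ (the latter handled by quoting Gongyo's Kodaira-dimension-$0$ result), and then attacks $\chi$ by Hodge theory: it writes $K_X+\Delta\sim_\Q f^*(K_Y+B+J)$ via Kawamata's formula, builds a polarized VHS $\mathbb H$ on an open set of $Y$ out of the fibration, and checks that $\chi(\Bir(X,\Delta))$ preserves both $B$ and $\mathbb H$. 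Finiteness is then obtained by excluding a $\G_m$ (or $\G_a$) in the Zariski closure of the image in $\Aut(Y)$: along a general $\G_m$-orbit $o\colon\P^1\to Y$ the pulled-back VHS is trivial by Deligne's semisimplicity, so $J$ contributes nothing, while an elementary log-adjunction bound gives $\deg o^*(K_Y+B)\le 0$, contradicting ampleness of $K_Y+B+J$. Your approach trades this VHS machinery for analysis and arithmetic; the price is the delicate induction along the boundary.

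That induction is where your argument has a real gap. Let $W\subset V$ be the subspace of sections vanishing on $\lfloor\Delta\rfloor$. Your pseudonorm is finite and nondegenerate precisely on $W$, so you get that the image of $\rho_M$ in $\GL(W)$ sits in a compact isometry group (hence consists of semisimple operators with unit-modulus eigenvalues), and you want the image in $\GL(V/W)$ to be finite by induction. But the image in $\GL(V)$ is then only block upper-triangular for the flag $0\subset W\subset V$, and nothing you have written bounds the off-diagonal unipotent piece in $\Hom(V/W,W)$. The sentence ``Hence the image of $\rho_M$ has bounded exponent'' is the unjustified leap: eigenvalues being roots of unity of bounded order yields bounded exponent only for semisimple elements, and a unipotent $\begin{pmatrix}1&*\\0&1\end{pmatrix}$ has all eigenvalues $1$ yet infinite order, so Burnside does not apply. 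This is precisely the obstacle separating the lc case from the klt case, and overcoming it is the substantive content of \cite{Fujino00,FG11}; it requires in particular a careful check that each $g\in\Bir(X,\Delta)$ induces a $\mathbf B$-birational self-map of the (normalized) boundary so that the inductive hypothesis is even available on $V/W$, together with a mechanism that genuinely kills the unipotent extension rather than just the graded pieces. ``Split off the degenerate part and run the compactness argument on the klt directions'' does not supply either ingredient.
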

 \begin{rmk} Theorem \eqref{t-fb} was proven by different methods in \cite[1.1]{FG11}.
The argument in the current note was originally contained in \cite{HMX11}. During the preparation of \cite{HMX11} we were informed of \cite{FG11} and we decided to include \eqref{t-fb} in a separated paper. Our proof uses the case when the Kodaira dimension is 0, which was proved by Gongyo (cf. \cite{Gongyo10}) using ideas from \cite{Fujino00}. 
\end{rmk}

The second part of our paper is focused on the study of semi-log canonical abundance. One of the main applications of the finiteness of ${\bf B}$-representations is  to prove that abundance for semi-log canonical pairs, follows from abundance for log canonical pairs (cf. \cite{Fujino00,FG11}). 
Using Koll\'ar's gluing theory, as a consequence of \eqref{t-fb}, we prove the following.
 \begin{theorem}\label{slc}
Let $(X,\Delta)$ be a semi-log canonical pair, $f:X\to S$ a projective morphism, $n: \bar{X}\to X$ the normalization and write $n^*(K_X+\Delta)=K_{\bar{X}}+\bar{\Delta}+\bar{D}$, where $\bar{D}$ is the double locus. If $K_{\bar{X}}+\bar{\Delta}+\bar{D}$ is semi-ample over $S$, then $K_{X}+\Delta$ is semi-ample over $S$.  
\end{theorem}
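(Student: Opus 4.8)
The plan is to produce the relative ample model of $(X,\Delta)$ over $S$ by descending, via Koll\'ar's gluing theory, the ample model of the normalization. First I would recall the gluing description of an slc pair: since $(X,\Delta)$ is slc, the pair $(\bar X,\bar\Delta+\bar D)$ is lc, and $X$ is recovered from $\bar X$ as the quotient by the finite equivalence relation determined by the gluing involution $\tau$ of the normalization $\bar D^{\nu}$ of the double locus. By adjunction $(K_{\bar X}+\bar\Delta+\bar D)|_{\bar D^{\nu}}=K_{\bar D^{\nu}}+\mathrm{Diff}$ and $\tau$ preserves this log structure, so $\tau$ is a $\mathbf B$-birational automorphism of $(\bar D^{\nu},\mathrm{Diff})$. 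In these terms, proving that $K_X+\Delta$ is semi-ample over $S$ amounts to constructing a projective contraction $\phi\colon X\to Y$ over $S$ with $K_X+\Delta\sim_{\Q,S}\phi^*A$ for some relatively ample $\Q$-divisor $A$, obtained by gluing the ample model upstairs.

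Next, using the hypothesis I would take the relative ample model $\bar\pi\colon\bar X\to\bar Y$ over $S$, so that $K_{\bar X}+\bar\Delta+\bar D\sim_{\Q,S}\bar\pi^*\bar A$ with $\bar A$ relatively ample. To descend $\bar\pi$ through the quotient $\bar X\to X$ I need $\bar\pi$ to be constant on the $\tau$-orbits along $\bar D^{\nu}$; equivalently, the two sheets of the double locus identified by $\tau$ must have the same image in $\bar Y$. Restricting $\bar\pi$ to $\bar D^{\nu}$ gives the relative ample model of $(\bar D^{\nu},\mathrm{Diff})$ (the pullback of a semi-ample divisor being semi-ample), and since $\tau$ is $\mathbf B$-birational it induces an automorphism of this ample model. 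The obstruction to gluing is then the size of the group generated, on each lc centre, by such induced automorphisms together with all the further gluing identifications among centres.

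This is precisely where Theorem \eqref{t-fb} enters, and it is the main point of the argument. Passing to a dlt model if necessary, $(\bar D^{\nu},\mathrm{Diff})$ is a lower-dimensional dlt pair whose log canonical divisor is semi-ample over $S$, so its $\mathbf B$-representation has finite image; hence $\tau$ acts through a finite group on the relevant section spaces. Arguing hereditarily over the stratification by lc centres, the automorphisms induced by $\tau$ and by the gluing maps through all deeper strata generate a \emph{finite} group, so after replacing $K_X+\Delta$ by a sufficiently divisible multiple the induced action is trivial on the linear systems in question. The hard part is exactly this step: one must rule out that composing gluing maps through different lc centres generates infinite orbits, and finiteness of the $\mathbf B$-representation is what guarantees that the equivalence relation pushed forward to $\bar Y$ has finite orbits, i.e.\ that it is a finite (proper) equivalence relation in Koll\'ar's sense.

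Finally, I would invoke Koll\'ar's theorem on the existence of geometric quotients by finite proper equivalence relations to form $Y:=\bar Y/R$ over $S$, which is semi-normal and projective over $S$. The relatively ample divisor $\bar A$ is invariant under the descended relation, hence descends to a relatively ample $\Q$-divisor $A$ on $Y$, and the induced morphism $\phi\colon X\to Y$ satisfies $K_X+\Delta\sim_{\Q,S}\phi^*A$. Therefore $K_X+\Delta$ is semi-ample over $S$, as desired.
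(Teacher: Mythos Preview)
Your overall strategy coincides with the paper's: push the semi-ample map $\bar X\to\bar Y$ forward, show that the pro-finite relation $\bar T\rightrightarrows\bar Y$ induced by the gluing involution on $\bar D^{\nu}$ generates a \emph{finite} stratified equivalence relation, apply Koll\'ar's quotient theorem, and descend the polarization. You have also correctly located the hard step. However, the paragraph in which you claim to carry it out (``Arguing hereditarily over the stratification \dots'') is where the actual content lies, and what you have written does not supply it.

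The difficulty is this: the $R$-equivalence class of a point $y$ in a stratum $V\subset\bar Y$ is obtained by iterating identifications that may pass through \emph{higher}-dimensional strata and return to $V$; there is no a priori reason the resulting self-maps of $V$ come from $\Bir(\bar D^{\nu},{\rm Diff})$, nor even that they are automorphisms of $V$. Finiteness of the $\mathbf B$-representation of $(\bar D^{\nu},{\rm Diff})$ by itself does not control these chains. The paper closes this gap by invoking Koll\'ar's theory of \emph{sources and springs}: to each stratum $V$ one attaches a minimal lc centre $Z$ of a dlt model dominating $V$ (the source) and the Stein factorization $\tilde V\to V$ (the spring); the crucial structural result (Theorem~4.7 here, \cite[33]{Kollar11b}) asserts that, after lifting $R$ to $\tilde V\times\tilde V$, the relation over the generic point is contained in the union of graphs $\Gamma(\chi(g))$ for $g\in\Bir(Z_{\bar\eta},{\rm Diff}^*_{Z_{\bar\eta}}\Delta^d)$. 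Only then does Theorem~\ref{t-f-image} (equivalently~\ref{t-fb}) apply to give finiteness. Your sketch does not mention this mechanism, and without it the ``hereditary'' assertion is an unjustified claim.

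Two smaller points. First, Theorem~\ref{t-fb} is stated for \emph{projective} dlt pairs; to use it in the relative setting the paper base-changes to the algebraic closure $\bar\eta$ of the generic point of the image of each stratum in $S$, and checks finiteness there (cf.\ \cite[3.61]{Kollar11}). You should say this explicitly. Second, the phrase ``the induced action is trivial on the linear systems'' is not what is needed and not what Theorem~\ref{t-fb} gives: the image of $\rho_M$ is finite, not trivial, and finiteness (not triviality) of the equivalence relation is exactly the hypothesis in Koll\'ar's quotient theorem; the descent of the line bundle then follows as in \cite[3.1]{HX11}.
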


As a corollary we recover the following result conjectured by C. Birkar (cf. \cite[1.2]{Birkar11}), which is known to be a natural step of log canonical minimal model program in the relative case (cf. \cite[Section 7]{KMM94}).
\begin{corollary}\label{c-lc} Let $(X, \Delta)$ be a $\mathbb{Q}$-factorial dlt pair which is projective over a  variety $S$, and $T := \lfloor \Delta \rfloor$ where $\Delta$ is a $\mathbb{Q}$-divisor. Suppose that
\begin{enumerate} 
\item $K_X + \Delta$ is nef over $S$,  
\item $(K_X + \Delta)|_{T_i}$ is semi-ample over $S$ for each component $T_i$ of $T$, 
\item  $K_X + \Delta-\epsilon P$ is semi-ample over $S$ for some $\mathbb{Q}$-divisor $P \ge 0$ with ${\rm  Supp} (P) = T$ and for any sufficiently small rational number $\epsilon > 0$. 
\end{enumerate}
Then, $K_X + \Delta$ is semi-ample over $S$.
\end{corollary}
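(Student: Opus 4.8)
The plan is to deduce the corollary from Theorem~\eqref{slc} applied to the reduced boundary $T=\lfloor\Delta\rfloor$, and then to propagate semi-ampleness from $T$ to the whole of $X$ using the remaining two hypotheses. Throughout write $L:=K_X+\Delta$. Since $(X,\Delta)$ is dlt, every irreducible component $T_i$ of $T$ is normal, so $T=\bigcup_i T_i$ is a reduced, seminormal divisor whose normalization is $n\colon \bar T=\bigsqcup_i T_i\to T$. Divisorial adjunction endows the pair $(T,\Delta_T)$, defined by $(K_X+\Delta)|_T=K_T+\Delta_T$, with a semi-log canonical structure, and one has $n^*(K_T+\Delta_T)=K_{\bar T}+\bar\Delta+\bar D$, where $\bar D$ is the double locus, i.e. the part of the different arising from the mutual intersections of the $T_i$. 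Restricting to the component $T_i$ this reads $(K_X+\Delta)|_{T_i}=K_{T_i}+\bar\Delta|_{T_i}+\bar D|_{T_i}$.

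First I would use hypothesis (2): it says exactly that each $(K_X+\Delta)|_{T_i}$ is semi-ample over $S$, and since semi-ampleness on the disjoint union $\bar T=\bigsqcup_i T_i$ can be tested on each (irreducible) component, this is equivalent to $K_{\bar T}+\bar\Delta+\bar D$ being semi-ample over $S$. Applying Theorem~\eqref{slc} to $(T,\Delta_T)\to S$ then yields that $K_T+\Delta_T=(K_X+\Delta)|_T$ is semi-ample over $S$. This is precisely the point where the paper's main input is essential: componentwise semi-ampleness does \emph{not} by itself descend to the non-normal variety $T$, and it is the gluing provided by Theorem~\eqref{slc} (ultimately resting on finiteness of $\mathbf{B}$-representations) that upgrades hypothesis (2) to semi-ampleness of $L|_T$ on all of $T$.

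It then remains to extend semi-ampleness from $T$ to $X$ using (1) and (3). By (3) there is a contraction $g\colon X\to Z$ over $S$ and a $\mathbb{Q}$-divisor $A$, ample over $S$, with $L-\epsilon P\sim_{\mathbb{Q}} g^*A$; since $\supp P=T$ and $g^*A$ is base-point-free, the stable base locus $\mathbf{B}(L)$ is contained in $T$. Combined with the semi-ampleness of $L|_T$ just obtained and the nef-ness hypothesis (1), the task reduces to lifting the semi-ample linear system from $T$ to $X$, i.e. to showing that the restriction $H^0(X,\sO_X(mL))\to H^0(T,\sO_T(mL))$ is surjective for a suitable divisible $m$.

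I expect this last lifting to be the main obstacle. Writing $mL-T=K_X+\{\Delta\}+(m-1)L$ exhibits the kernel $\sO_X(mL-T)$ in a form suited to Kawamata--Viehweg vanishing, but $L$ is only nef and in general not big, so vanishing cannot be applied directly on $X$. The way around this is to argue relatively over the contraction $g$ furnished by (3): there the perturbation term makes $L$ relatively semi-ample and confines the problem to a neighbourhood of $T$, so that the base-point-free and vanishing machinery of \cite{KMM94} applies over $Z$, after which the ampleness of $A$ on $Z$ recovers semi-ampleness of $L$ over $S$. This is exactly why hypothesis (3) is indispensable, and it is the genuinely delicate step; the reduction in the first two paragraphs, by contrast, is formal once Theorem~\eqref{slc} is available.
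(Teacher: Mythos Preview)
Your overall strategy coincides with the paper's: first apply Theorem~\eqref{slc} to the slc pair $(T,\Delta_T)$ to upgrade hypothesis~(2) to semi-ampleness of $(K_X+\Delta)|_T$ over $S$, then observe from~(3) that the relative base locus of $m(K_X+\Delta)$ lies in $T$, and finally lift sections from $T$ to $X$. The first two steps are exactly as in the paper.

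The gap is in the lifting step. Your decomposition $mL-T=K_X+\{\Delta\}+(m-1)L$ leaves the nef-but-not-big term $(m-1)L$, and you correctly note Kawamata--Viehweg does not apply; but the proposed detour through the contraction $g\colon X\to Z$ is not the device that closes the argument, and as written it does not produce the needed surjectivity. The paper (following \cite[7.4]{KMM94}) instead uses the decomposition
\[
m(K_X+\Delta)-T=(m-1)\bigl(K_X+\Delta-\epsilon P\bigr)+K_X+\bigl(\Delta-T+(m-1)\epsilon P\bigr),
\]
where the first summand is \emph{semi-ample} over $S$ by hypothesis~(3) and the second is $K_X$ plus a boundary which is klt for $0<\epsilon\ll1$ (since $\supp P=T=\lfloor\Delta\rfloor$). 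One then applies Koll\'ar's \emph{injectivity} theorem (not vanishing) to conclude that multiplication by $T$ induces an injection
\[
R^1f_*\sO_X\bigl(m(K_X+\Delta)-T\bigr)\hookrightarrow R^1f_*\sO_X\bigl(m(K_X+\Delta)\bigr),
\]
hence $f_*\sO_X(m(K_X+\Delta))\to f_*\sO_T(m(K_T+\Delta_T))$ is surjective, which gives relative global generation along $T$ and therefore everywhere. So the missing ingredient is precisely this perturbed decomposition together with Koll\'ar's injectivity; once you insert that, your argument is complete and matches the paper.
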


Another corollary is the following result, which answers a question raised by J. Koll\'ar in the problem session in MSRI in March 2009.

\begin{corollary}\label{lc}
Let $(X,\Delta)$ be a semi-log canonical pair and $f:X\to S$ a projective morphism, such that $K_X+\Delta\equiv_{\Q , S} 0$. Then $K_X+\Delta\sim_{\mathbb{Q},S}0$.
\end{corollary}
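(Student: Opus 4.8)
The plan is to reduce the statement to the normalization, settle the log canonical case there, and then descend the resulting triviality back to $X$ using Theorem \eqref{slc}. Let $n:\bar X\to X$ be the normalization and write $n^*(K_X+\Delta)=K_{\bar X}+\bar\Delta+\bar D$ as in the statement of \eqref{slc}. Since $n$ is finite and surjective, numerical classes pull back, so $K_{\bar X}+\bar\Delta+\bar D\equiv_{\Q,S}0$, and, restricting to each irreducible component, $(\bar X,\bar\Delta+\bar D)$ is a log canonical pair which is projective over $S$.

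First I would treat the log canonical (normal) case: for a projective log canonical pair whose log canonical divisor is numerically trivial over $S$, that divisor is $\Q$-linearly trivial over $S$. This is the essential arithmetic input, and it rests on the Kodaira dimension $0$ result of Gongyo (cf. \cite{Gongyo10}) already invoked in the Remark following \eqref{t-fb}; one reduces to it by passing to a general fibre of $f$ and spreading out, together with the standard fact that a relatively semi-ample and relatively numerically trivial $\Q$-divisor is relatively $\Q$-linearly trivial (its relative ample model is finite over $S$). Applying this to $(\bar X,\bar\Delta+\bar D)$ yields $K_{\bar X}+\bar\Delta+\bar D\sim_{\Q,S}0$; in particular this divisor is semi-ample over $S$.

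Now I would feed this into Theorem \eqref{slc}: since $K_{\bar X}+\bar\Delta+\bar D$ is semi-ample over $S$, the theorem gives that $K_X+\Delta$ is semi-ample over $S$. It then remains to upgrade semi-ampleness to $\Q$-linear triviality, using the hypothesis $K_X+\Delta\equiv_{\Q,S}0$. Choosing $m$ so that $m(K_X+\Delta)$ is relatively globally generated, the associated morphism $X\to Z$ over $S$ must contract every fibre of $f$ to a point, since a curve in a fibre would have strictly positive degree against the relatively ample divisor on $Z$ pulled back to it, contradicting numerical triviality. Hence $Z\to S$ is finite, $m(K_X+\Delta)$ is pulled back from $S$, and therefore $K_X+\Delta\sim_{\Q,S}0$.

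The step I expect to be the main obstacle is the descent packaged in Theorem \eqref{slc}: passing from $\sim_{\Q,S}0$ on $\bar X$ to semi-ampleness on $X$ requires the trivializing section to be compatible with the gluing involution on the conductor $\bar D$. This is precisely where finiteness of {\bf B}-representations \eqref{t-fb} enters, since after replacing $m$ by a suitable multiple the involution acts trivially on the relevant (one-dimensional) space of sections, so that the distinguished generator descends to $X$. The remaining ingredients, namely the log canonical base case and the elementary upgrade from semi-ample to $\Q$-trivial, are comparatively routine given the cited inputs.
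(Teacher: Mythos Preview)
Your reduction to the normal case via \eqref{slc} is correct and matches the paper's first move. The gap is in your treatment of the log canonical (normal) case itself. You assert that ``for a projective log canonical pair whose log canonical divisor is numerically trivial over $S$, that divisor is $\Q$-linearly trivial over $S$'' follows from Gongyo's result by ``passing to a general fibre of $f$ and spreading out''. But Gongyo \cite{Gongyo10} only proves the absolute case (and \cite{FG11} the case of projective $S$); the paper explicitly notes after \eqref{lc} that the general relative case is not available in the literature and is precisely what is being established here. Applying Gongyo to a general fibre gives $K_{\bar X}+\bar\Delta+\bar D\sim_{\Q}0$ on $X_\eta$, hence $\sim_{\Q,S}0$ over some open $U\subset S$, but this does not spread to all of $S$: over the complement, $(\bar X,\bar\Delta+\bar D)$ may acquire lc (non-klt) centers, and nothing you have written controls what happens there. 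Your appeal to ``relatively semi-ample and relatively numerically trivial implies relatively $\Q$-linearly trivial'' is fine as a final step, but it presupposes relative semi-ampleness, which is the very content of the corollary.

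The paper's argument for the normal case is substantially more involved: take a dlt modification, set $T=\lfloor\Delta\rfloor$, and run a $(K_X+\Delta-\epsilon T)$-MMP with scaling over $S$. If some component of $T$ dominates $S$ one lands in a Fano contraction handled as in \cite{Gongyo10}; otherwise $T$ is vertical, the generic fibre is klt (so $\sim_\Q 0$ there by \cite{Nakayama04}), and \cite{HX11} yields a good minimal model $X'$ for $(X,\Delta-\epsilon T)$ over $S$. On a dlt model of $(X',\Delta')$ the hypotheses of \eqref{c-lc} are then verified by induction on the dimension applied to the components of $\lfloor\Delta^d\rfloor$, and \eqref{c-lc} gives semi-ampleness. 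This MMP/induction package is exactly what replaces your ``spreading out'' step, and it is not routine.
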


\begin{rmk}
A result due to O. Fujino and Y. Gongyo (cf. \cite[4.13]{FG11}), implies \eqref{slc}  when the base $S$ is projective. Recent work of J. Koll\'ar on gluing lc centers provides us with a technique to prove the general case.

The absolute case of \eqref{lc} or the case when $S$ is projective is also known (cf. \cite{Gongyo10}).  However, the general relative case does not seem to be available anywhere in the literature (see the remark in \cite[4.16]{FG11}). 
\end{rmk}

\section{Hodge theoretic construction}

\begin{construction}[cf. {\cite[8.4.6]{Kollar07}}]\label{loc}
Let $(X,\Delta )$ be a log canonical pair, and $f:X\to Y$ a proper surjective morphism of normal varieties with connected fibers such that $n=\dim X-\dim Y$ and $K_X+\Delta \sim _{\Q ,Y}0$. 

 Let $p:W\to X$ be a log resolution of $(X,\Delta)$.
Write
$$p^*(K_{X}+\Delta)=K_W+E+F-G,$$ where $E$ and $G$ are integral effective divisors with no common components and $F=\{F\}$. Let $a$ be an integer such that $aF$ is an integral divisor. Denote by $\phi=f \circ p:W\to Y$.

Let  $Y^0\subset Y$ be a smooth open subset such that $\phi$ is smooth over $Y^0$. We denote by $\bullet^0$ the base change over $Y^0$.
Replacing  $Y^0$ by an open subset, we may assume that $(K_{X}+\Delta)|_{X^0}\sim_{\mathbb{Q}} 0$. We define a line bundle  $V^0=\omega^{-1}_{W^0}(G^0-E^0)$ so that ${V^0}^{\otimes a}\cong \mathcal{O}_{W^0}(aF^0)$.
 This data defines a local system $\mathbb{V}^0$ on $W^0\setminus {\rm Supp}(E^0\cup F^0)$ (cf. \cite[8.4.6]{Kollar07}, \cite[3.2]{EV92} and its proof). 
 
  Consider  the normalization of the corresponding  $\mu_a$-cover $\pi:W'\to W^0$, and denote by $E'$ the reduced divisor supported on $\pi^*E^0$. The push-forward $\pi_*(\C|_{W'\setminus E'})$ has a $\mu_a$-action. If we decompose 
  $$\pi_*(\C|_{W'\setminus E'})=\bigoplus_i \pi_*(\C|_{W'\setminus E'})^{(i)}$$ into the corresponding eigenspaces, then  $\V^0$ is isomorphic to the restriction of $\pi_*(\C|_{W'\setminus E'})^{(1)}$ to $W^0\setminus (E^0\cup F^0)$, we denote $\pi_*(\C|_{W'\setminus E'})^{(1)}$ by $\mathbb{V}$. The choice of $\mathbb{V}$ is determined up to the choice of a unit in $\mathcal O_{Y^0}$. However, $(R^{n}\phi_*\mathbb{V})^{\otimes a}$ (we will sloppily denote $\phi|_{W^0\setminus E^0} $ by $\phi$) is a well defined local system on $Y^0$ (cf. \cite[8.4.7]{Kollar07}). 

  Denote by $\phi':W'\to Y^0$ the composite morphism  $\phi \circ\pi$ (and its restriction to open subsets). Then 
  $R^{n}\phi'_*\mathbb{V}$  is a direct summand of $R^{n}{\phi'}_*\mathbb{C}|_{W'\setminus E'}$ which carries a variation of mixed Hodge structure. We remark that even though $W'$ has quotient singularities, locally $(W',E')$ is a finite quotient of a log smooth pair and hence Hodge theoretically it behaves  as a smooth variety with a simple normal crossing divisor, at least for $\mathbb{Q}$-coefficients (cf. \cite[Section 1]{Steenbrink77}).
  
 The local system  $R^{n}\phi _*\mathbb{V}$ gives a variation of mixed Hodge structure on $Y^0$.  It follows from \cite[1.18]{Steenbrink77} that the bottom piece of the Hodge filtration gives an isomorphism
 $$F^nR^n{\phi'}_*(\mathbb{C}|_{W'\setminus E'})\cong {\phi'}_*\omega_{W'/Y}(E'). $$ 
 Considering the eigenspace, we conclude  that $F^nR^n{\phi}_*(\mathbb{\V}|_{W^0\setminus E^0})\cong {\phi}_*\mathcal{O}_{W^0}(G^0)$ is a line bundle over $Y^0$ (cf. \cite[8.4.5(7')]{Kollar07}).

Denote by $L$ this line bundle. 
Since $L$ is 1 dimensional, there exists an integer $i$ such that, $L\subset W_{n+i} ( R^{n}\phi_*\mathbb{V})$ but $L\not\subset W_{n+i-1} ( R^{n}\phi_*\mathbb{V})$. Let $\mathbb{H}$ be the smallest pure sub-$\mathbb{Q}$-VHS of  $Gr^W_{n+i}  (R^{n}\phi_*\mathbb{V})$ which contains $L$. 

\end{construction}

\begin{lemma}\label{l-ind}
 $\mathbb{H}$ does not depend on the choice of the resolution of $W$.
\end{lemma}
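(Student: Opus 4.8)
The plan is to reduce to the case of a dominating morphism of resolutions and then transport the whole construction along it, the crux being that the bottom Hodge piece $L$ is a birational invariant. First I would use that any two log resolutions of $(X,\Delta)$ are dominated by a third, so that it suffices to compare the data attached to $W$ with that attached to a further log resolution $h:\hat W\to W$ over $X$. After shrinking $Y^0$ we may assume that $\hat\phi=\phi\circ h$ is also smooth over $Y^0$ and that all the normalizations in Construction \eqref{loc} hold simultaneously; write $\hat E,\hat F,\hat G,\hat{\mathbb V},\hat L,\hat{\mathbb H}$ for the objects attached to $\hat W$. Pulling back $p^*(K_X+\Delta)=K_W+E+F-G$ by $h$ gives $\hat p^*(K_X+\Delta)=K_{\hat W}+\hat E+\hat F-\hat G=h^*(K_W+E+F-G)$, so the cyclic-cover data on $\hat W^0$ is the pullback of that on $W^0$; in particular the normalized $\mu_a$-covers fit into a proper birational morphism $\widetilde h:\hat W'\to W'$ over $Y^0$.

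Next I would isolate the two inputs that drive the comparison. The first is the birational invariance of the bottom Hodge piece: since $h_*\mathcal O_{\hat W}(\hat G)=\mathcal O_W(G)$ (cf. \cite[8.4.5]{Kollar07}) and $\hat\phi_*=\phi_*h_*$, one obtains a canonical identification $\hat L=\hat\phi_*\mathcal O_{\hat W^0}(\hat G^0)\cong\phi_*\mathcal O_{W^0}(G^0)=L$, compatible with the descriptions of $L$ and $\hat L$ as the bottom pieces $F^n$ of the respective filtrations. The second is that pullback along $\widetilde h$ induces, on the $(1)$-eigenspaces for the $\mu_a$-action, a morphism of variations of mixed Hodge structure $\alpha:R^{n}\phi_*\mathbb V\to R^{n}\hat\phi_*\hat{\mathbb V}$ which restricts to the isomorphism $L\xrightarrow{\ \sim\ }\hat L$ above and is injective; injectivity follows because $\widetilde h$ is a proper birational morphism between varieties with quotient singularities, so by the decomposition theorem its pullback admits a left inverse with $\Q$-coefficients.

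I would then conclude by a formal functoriality argument. As a morphism of mixed Hodge structures, $\alpha$ is strict for $W_\bullet$ and respects $F^\bullet$; since $\alpha(L)=\hat L$ and $\alpha$ is injective, strictness forces the weight index of $\hat L$ to equal that of $L$, namely $n+i$, and $\alpha$ therefore induces an injective morphism of polarizable pure $\Q$-VHS $\bar\alpha:Gr^W_{n+i}(R^{n}\phi_*\mathbb V)\hookrightarrow Gr^W_{n+i}(R^{n}\hat\phi_*\hat{\mathbb V})$ carrying the image of $L$ onto that of $\hat L$. Working in the semisimple abelian category of polarizable $\Q$-VHS on $Y^0$, the preimage $\bar\alpha^{-1}(\hat{\mathbb H})$ is a sub-VHS containing $L$, whence $\mathbb H\subseteq\bar\alpha^{-1}(\hat{\mathbb H})$, while $\bar\alpha(\mathbb H)$ is a sub-VHS containing $\hat L$, whence $\hat{\mathbb H}\subseteq\bar\alpha(\mathbb H)$. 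Together these give $\bar\alpha(\mathbb H)=\hat{\mathbb H}$, and hence $\mathbb H\cong\hat{\mathbb H}$.

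I expect the main obstacle to lie in the construction and control of $\alpha$ in the second step, not in the final manipulation. Because the open-fibre cohomologies genuinely change when $E$ is enlarged by $h$, the map $\alpha$ will not be an isomorphism of the full $R^{n}\phi_*\mathbb V$, so one cannot simply identify the ambient variations; one must instead verify that $\widetilde h^{*}$ descends to a well-defined morphism of variations of mixed Hodge structure compatible with the $\mu_a$-eigenspace decomposition, that it is injective, and that it restricts to an isomorphism precisely on the birationally invariant line $L$. Checking this compatibility together with the strictness bookkeeping for $W_\bullet$ is where the real work sits; once it is in place, semisimplicity of polarizable $\Q$-VHS renders the identification of $\mathbb H$ with $\hat{\mathbb H}$ purely formal.
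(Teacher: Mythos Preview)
Your reduction to a dominating resolution, the identification $L\cong\hat L$ of the bottom Hodge pieces, and the semisimplicity argument at the end are all on target and match the paper's strategy. The gap is in the second step: the morphism of VMHS $\alpha:R^n\phi_*\mathbb V\to R^n\hat\phi_*\hat{\mathbb V}$ that you want does not exist. The reason is that $\widetilde h:\hat W'\to W'$ does \emph{not} restrict to a map $\hat W'\setminus\hat E'\to W'\setminus E'$: one has $\hat E'\subset\widetilde h^{-1}(E')$ (every discrepancy~$-1$ divisor on $\hat W$ lies over $E$), but the inclusion is strict in general, since an $h$-exceptional divisor mapping into $E$ can have discrepancy $>-1$ and hence not lie in $\hat E$. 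So pullback along $\widetilde h$ lands in $R^n\hat\phi'_*(\C|_{\hat W'\setminus\widetilde h^{-1}(E')})$, not in $R^n\hat\phi'_*(\C|_{\hat W'\setminus\hat E'})$. The decomposition theorem gives you a left inverse only for the proper map $\hat W'\setminus\widetilde h^{-1}(E')\to W'\setminus E'$, not for a map that doesn't exist.

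The paper's fix is exactly to work with this intermediate: set $E^*=\widetilde h^{-1}(E')$ and use the two natural morphisms of VMHS
\[
R^n\phi'_*(\C|_{W'\setminus E'})\ \xrightarrow{\ j\ }\ R^n\hat\phi'_*(\C|_{\hat W'\setminus E^*})\ \xleftarrow{\ i\ }\ R^n\hat\phi'_*(\C|_{\hat W'\setminus \hat E'}),
\]
the first being pullback along the proper map and the second being restriction along the open inclusion $\hat W'\setminus E^*\hookrightarrow\hat W'\setminus\hat E'$. The point you call ``birational invariance of $L$'' is then upgraded, via Lemma~\ref{l-ho}, to the statement that $F^ni$ and $F^nj$ are \emph{isomorphisms}; this forces the images of $\mathbb H$ and $\hat{\mathbb H}$ in $Gr^W_{n+i}$ of the intermediate object to coincide (by your minimality/semisimplicity argument, applied to both arrows), and hence $\mathbb H\cong\hat{\mathbb H}$. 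So your outline becomes correct once you replace the nonexistent direct $\alpha$ by this roof of two morphisms into a common target.
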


\begin{rmk}\label{r-loc}
 Let $\mathbb{H}_1$ and $\mathbb{H}_2$ be two local system defined over nonempty open subsets $U_1,U_2\subset Y$ such that $\mathbb{H}_1|_{U_1\cap U_2}\cong \mathbb{H}_2|_{U_2\cap U_2} $, since $Y$ is normal, then there is a unique local system $\mathbb{H}$ (up to a unique isomorphism) defined over $U_1\cup U_2$, such that its restriction to $U_i$ is isomorphic to $\mathbb{H}_i$.
\end{rmk}

\begin{proof}[Proof of \eqref{l-ind}]From the above remark, it suffices to verify the lemma for any nonempty open set $Y^0\subset Y$.
 For two resolutions $W_1,W_2$, we can assume that there exists a morphism $\psi:W_2\to W_1$. By shrinking $Y^0$, we can also assume that $W_1$ and $W_2$ are both smooth over $Y^0$ (cf. \eqref{r-loc}). 	 If $p_1^*(K_{X}+\Delta)=K_{W_1}+E_1+F_1-G_1$, and $aF_1$ is integral, then 
 $$p_2^*(K_{X}+\Delta)=\psi^*(K_{W_1}+E_1+F_1-G_1)=K_{W_2}+E_2+F_2-G_2,$$
 and $aF_2$ is easily seen to be integral. Applying the construction \eqref{loc} for both $W_i$  (here we choose the same unit in $\mathcal{O}_{Y^0}$ for $W_i$), we have that $W'_2$ is the normalization of $W^0_2\times_{W^0_1} W_1'$
 and we denote by $\psi':W_2'\to W_1' $.  Let $E^*={\rm Supp}(\psi'^{-1}{E}_1')$, then we know that $E_2'\subset E^*$.  
 \begin{diagram}
 E_2'\subset E^*\subset W_2'& \rTo& E_2\subset W_2 \\
 \dTo^{\psi'} & &\dTo^{\psi}\\
 E_1'\subset W_1'& \rTo& E_1\subset W_1 
 \end{diagram}
 
 Therefore, there exist morphisms between mixed Hodge structures on $Y^0$
  $$i:R^n{\phi'_2}_*(\mathbb{C}|_{W'_2\setminus E_2'})\to R^n{\phi'_2}_*(\mathbb{C}|_{W'_2\setminus E^*})\qquad \mbox{and} \qquad$$
$$j:  R^n{\phi'_1}_*(\mathbb{C}|_{W'_1\setminus E_1'})\to R^n{\phi'_2}_*(\mathbb{C}|_{W'_2\setminus E^*}) .$$
 Applying the Hodge filtration $F^n(\cdot)$, it follows from \cite[1.18]{Steenbrink77} that 
 $$F^nR^n{\phi'_2}_*(\mathbb{C}|_{W'_2\setminus E_2'})\cong {\phi'_2}_*\omega_{W_2'/Y}(E_2'),  $$
 and similarly for other pairs.
Thus we have morphisms $$F^ni: {\phi'_2}_*\omega_{W_2'/Y}(E_2')\to{\phi'_2}_*\omega_{W_2'/Y}(E^*) \qquad \mbox{and} $$
$$\qquad \qquad \qquad F^nj: {\phi'_1}_*\omega_{W_1'/Y}(E_1') \to{\phi'_2}_*\omega_{W_2'/Y}(E^*).$$
 which are isomorphism by \eqref{l-ho}.

Replacing $Y^0$ by a smaller open set, by the discussion in \eqref{loc}, there are morphisms between mixed Hodge structures, $$R^n {\phi_1 }_* \mathbb{V}_1\rightarrow  R^n{\phi_2}_*(\mathbb{C}|_{W'_2\setminus E^*})\leftarrow R^n{\phi_2}_*\mathbb{V}_2.$$
We conclude that if we take $F^n(\cdot)$ of each term above, then we obtain an induced isomorphism $F^n(R^n {\phi_1 }_* \mathbb{V}_1)\to F^n(R^n {\phi_2 }_* \mathbb{V}_2)$.
Since polarized-VHSs form a semi-simple category, considering $Gr^W_{n+i}R^n{\phi _j}_*\mathbb V _j$, we see that the images of $\mathbb{H}_1$ and $\mathbb{H}_2$ are mapped to the same pure Hodge substructure of $Gr^W_{n+i} R^n{\phi_2}_*(\mathbb{C}|_{W'_2\setminus E^*})$. 

\end{proof}

\begin{lemma}\label{l-ho}
Let $f:Y_2\to Y_1$ be a birational morphism between normal projective varieties. Let $\Delta_1$ be reduced divisor on $Y_1$, such that $(Y_1,\Delta_1)$ is log canonical. Let $f^{-1}_*(\Delta_1)\leq \Delta_2\leq f^{-1}_*(\Delta_1)+{\rm Ex}(f)$ be an effective Weil divisor on $Y_2$ whose support contains all divisors of discrepancy $-1$ with respect to the pair $(Y_1,\Delta_1)$. Then there is a natural morphism $f_*\mathcal O_{Y_2}(K_{Y_2}+\Delta _2)\to \mathcal O_{Y_1}(K_{Y_1}+\Delta _1)$ inducing an isomorphism
$$H^0(Y_2,\mathcal O_{Y_2}(K_{Y_2}+\Delta_2))\cong H^0(Y_1,\mathcal O_{Y_1}(K_{Y_1}+\Delta_1)).$$
\end{lemma}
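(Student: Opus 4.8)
The plan is to reduce the statement to a comparison of valuations along the prime divisors of $Y_2$, and to isolate a single integrality (rounding) observation that forces the two spaces of global sections to coincide. First I would record the discrepancy formula. Since $(Y_1,\Delta_1)$ is log canonical, $K_{Y_1}+\Delta_1$ is $\Q$-Cartier and I may write
$$K_{Y_2}+\Delta_2=f^*(K_{Y_1}+\Delta_1)+\sum_E e_E\,E,$$
where the sum runs over the $f$-exceptional prime divisors $E$ and $e_E=b_E+a(E;Y_1,\Delta_1)$, with $b_E\in\{0,1\}$ the coefficient of $E$ in $\Delta_2$ (the strict transform $f^{-1}_*\Delta_1$ carries no exceptional component and drops out). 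The first step is to check that $e_E>-1$ for every $E$: by log canonicity $a(E;Y_1,\Delta_1)\ge -1$ and $b_E\ge0$, so the only way to reach $-1$ would be $a(E)=-1$ together with $b_E=0$; but any divisor with $a(E)=-1$ lies in $\operatorname{Supp}\Delta_2$ by hypothesis, forcing $b_E=1$ and hence $e_E=0$. Thus $e_E>-1$ in all cases.

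Next I would construct the natural morphism and check injectivity on $H^0$. Fixing a rational canonical section, I regard both $f_*\mathcal{O}_{Y_2}(K_{Y_2}+\Delta_2)$ and $\mathcal{O}_{Y_1}(K_{Y_1}+\Delta_1)$ as subsheaves of the constant sheaf of rational functions: a rational function $g$ lies in the first over an open set $V$ precisely when $\operatorname{div}_{Y_2}(g)+K_{Y_2}+\Delta_2\ge0$ over $f^{-1}(V)$, and in the second exactly when $\operatorname{div}_{Y_1}(g)+K_{Y_1}+\Delta_1\ge0$ over $V$. Writing $D_g=\operatorname{div}_{Y_1}(g)+K_{Y_1}+\Delta_1$ and using $\operatorname{div}_{Y_2}(g)=f^*\operatorname{div}_{Y_1}(g)$, the first condition becomes $f^*D_g+\sum_E e_E E\ge0$. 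On the non-exceptional prime divisors this is identical to $D_g\ge0$, i.e.\ to the membership condition on $Y_1$; on the exceptional divisors it imposes only the extra inequalities $\operatorname{coeff}_E(f^*D_g)+e_E\ge0$. Hence the first sheaf is naturally contained in the second, which furnishes the asserted morphism, injective on sections; and by properness $H^0(Y_2,\mathcal{O}_{Y_2}(K_{Y_2}+\Delta_2))=H^0(Y_1,f_*\mathcal{O}_{Y_2}(K_{Y_2}+\Delta_2))$.

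Surjectivity on $H^0$ is the step I expect to be the crux, and it follows from the rounding argument. Let $g\in H^0(Y_1,\mathcal{O}_{Y_1}(K_{Y_1}+\Delta_1))$, so $D_g\ge0$ is globally effective; then $f^*D_g\ge0$ and $\operatorname{coeff}_E(f^*D_g)\ge0$ for every $E$. I would then observe, using the discrepancy formula, that
$$\operatorname{coeff}_E(f^*D_g)+e_E=\operatorname{ord}_E(g)+\operatorname{coeff}_E(K_{Y_2}+\Delta_2)$$
is an integer, being the sum of the integer $\operatorname{ord}_E(g)$ and the integer coefficient of the Weil divisor $K_{Y_2}+\Delta_2$ along $E$. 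Since $\operatorname{coeff}_E(f^*D_g)\ge0$ and $e_E>-1$, this integer is $>-1$, hence $\ge0$; thus the exceptional inequalities hold automatically and $g$ lifts to a section on $Y_2$, giving surjectivity and the isomorphism on $H^0$.

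The delicate point is precisely this last paragraph. Effectivity of $f^*D_g$ only yields $\operatorname{coeff}_E(f^*D_g)\ge0$, which by itself is weaker than the required $\ge -e_E$ whenever $-1<e_E<0$ (equivalently, whenever $K_{Y_1}+\Delta_1$ fails to be Cartier along the center of $E$). What rescues the argument is that $\operatorname{coeff}_E(f^*D_g)$ and $-e_E$ have the same fractional part, so their sum is an integer, which lets one upgrade the strict bound $e_E>-1$ to $\ge0$. I would be careful to point out that $\Delta_2$ being an integral Weil divisor (so $b_E\in\{0,1\}$), together with the hypothesis on divisors of discrepancy $-1$, is used exactly to secure the strict inequality $e_E>-1$, without which the rounding step would break down.
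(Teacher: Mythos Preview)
Your proof is correct and follows essentially the same route as the paper: both establish that the exceptional coefficients $e_E=b_E+a(E;Y_1,\Delta_1)$ satisfy $e_E>-1$, and then use the integrality of $K_{Y_2}+\Delta_2$ to round. The paper packages this more compactly by writing $F+f^*(K_{Y_1}+\Delta_1)=K_{Y_2}+\Delta_2+E$ with $F\ge 0$ exceptional and $\lfloor E\rfloor=0$, then reading off the isomorphism on $H^0$ in one line, whereas you unpack the same rounding step section by section.
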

\begin{proof}
By assumption, we can write
$$F+f^*(K_{Y_1}+\Delta_1)=K_{Y_2}+\Delta_2+E,$$
where $F,E\ge 0$, $F$ is exceptional and $\lfloor E \rfloor=0$. Therefore, 
$$H^0(Y_1, \mathcal O_{Y_1}(K_{Y_1}+\Delta_1))\cong H^0(Y_2, \mathcal O_{Y_2}(\lfloor F+f^*(K_{Y_1}+\Delta_1)\rfloor ))\qquad $$ 
$$\qquad \cong H^0(Y_2, \mathcal O _{Y_2}(K_{Y_2}+\Delta_2+\lfloor E\rfloor))\cong H^0(Y_2, \mathcal O_{Y_2}(K_{Y_2}+\Delta_2)).$$
\end{proof}

\section{Finiteness of {\bf B}-representations}\label{s-2}
In this section we prove \eqref{t-fb}.
By assumption, there exists a positive integer $m$, such that $|m(K_{X}+\Delta)|$ is base point free and
it  induces an algebraic fibration structure $f:X\to Y$, so that 
$$Y={\rm Proj}\bigoplus_{d\geq 0} H^0(X,\mathcal O_{X}(dm(K_{X}+\Delta))).$$
It follows from Kawamata's canonical bundle formula (cf. \cite{Kawamata98}) that we can write
$$K_{X}+\Delta\sim_{\mathbb{Q}} f^*(K_Y+B+J),$$
where $B$ is the boundary part and $J$ is the moduli part. Let $P\in Y$ be a codimension 1 point, then the coefficient of $P$ in $B$ is defined by $$1-{\rm lct}(X,\Delta;f^{-1}(P))$$ where the log canonical threshold is computed over a  neighborhood of the generic point of $P$. In particular, $B$ is an effective $\mathbb{Q}$-divisor. The moduli part $J$ is defined as an equivalence class of $\mathbb{Q}$-divisors, coming from Hodge theory. 

For any $g\in {\rm Bir}(X, \Delta)$ and any $d\geq 0$ we have homomorphisms $\rho _{dm}: {\rm Bir}(X,\Delta )\to {\rm Aut }_\C H^0(X,\mathcal O_{X}(dm(K_{X}+\Delta)))$ where
$$\rho _{dm}(g)=g^*: H^0(X,\mathcal O_{X}(dm(K_{X}+\Delta)))\to H^0(X,\mathcal O_{X}(dm(K_{X}+\Delta))).$$
Consider the induced homomorphism 
$$\chi:{\rm Bir}(X,\Delta)\to {\rm Aut}(Y).$$ 
For any element $g\in {\rm Bir}(X, \Delta)$, there is a commutative diagram
\begin{diagram}
 X&\rDashto^g &X\\
\dTo^f& &\dTo^f\\
Y&\rTo^{\chi(g)}&Y
\end{diagram}
Let $F$ be the generic geometric fiber of $f$ and $n={\dim}F={\dim} X-{\dim} Y$ be the relative dimension.
Then for any $d$, we have a short exact sequence
$$1\to G\to \rho_{dm}({\rm Bir}(X,\Delta))\to \chi({\rm Bir}(X,\Delta))\to1,$$
where $G\subset H^0(Y,\mathcal{O}^*_Y)=\C^*$.  We know  that $G$ is finite (cf. \cite[4.9]{Gongyo10}), so it suffices to show the following result.

\begin{theorem}\label{t-f-image}
The image $\chi({\rm Bir}(X,\Delta))\subset {\rm Aut}(Y) $
is finite.
\end{theorem}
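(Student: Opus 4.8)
The plan is to manufacture an ample $\mathbb{Q}$-divisor on $Y$ that is fixed by the whole image $\chi(\Bir(X,\Delta))$, to realise this image inside a linear algebraic group, and then to use the variation of Hodge structure $\mathbb{H}$ built in Construction \eqref{loc} to rule out any positive-dimensional subgroup. The natural candidate for the fixed divisor is $K_Y+B+J$, which is ample because it is, up to $\mathbb{Q}$-linear equivalence, the descent of the semi-ample and $f$-big divisor $K_X+\Delta$.

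First I would show that every $\chi(g)$ preserves the canonical bundle formula $K_X+\Delta\sim_{\mathbb{Q}}f^*(K_Y+B+J)$ piece by piece. Since $g$ is $\mathbf{B}$-birational it is crepant for $(X,\Delta)$, so the log canonical thresholds computing the coefficients of $B$ along codimension-one points of $Y$ are intrinsic; combined with the commutative square relating $g$, $f$ and $\chi(g)$, this gives $\chi(g)^*B=B$. For the moduli part I would exploit the canonicity of the Hodge-theoretic construction: by Lemma \eqref{l-ind} the variation of Hodge structure $\mathbb{H}$, and hence its bottom Hodge piece $L$ (whose class is $J$), is independent of the chosen resolution, so the correspondence induced by $g$ transports $\mathbb{H}$ isomorphically to itself and $\chi(g)^*L\cong L$. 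Therefore $\chi(g)^*J\sim_{\mathbb{Q}}J$ and each $\chi(g)$ preserves the ample class $[K_Y+B+J]$.

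Next I would fix $N$ so that $N(K_Y+B+J)$ is very ample and Cartier. Then $\chi(g)^*\mathcal{O}_Y(N(K_Y+B+J))\cong\mathcal{O}_Y(N(K_Y+B+J))$, so $\chi(g)$ acts on $V=H^0(Y,\mathcal{O}_Y(N(K_Y+B+J)))$ compatibly with the embedding $Y\hookrightarrow\mathbb{P}(V^*)$. Hence $\chi(\Bir(X,\Delta))$ lies inside the stabiliser $\Aut(Y,\mathcal{O}_Y(N(K_Y+B+J)))\subseteq\PGL(V)$, which is a linear algebraic group of finite type. Consequently $\chi(\Bir(X,\Delta))$ is finite if and only if its Zariski closure is $0$-dimensional, that is, if and only if it contains no positive-dimensional connected algebraic subgroup.

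Finally, and this is where I expect the real difficulty, I would exclude a positive-dimensional connected subgroup $H\subseteq\overline{\chi(\Bir(X,\Delta))}$ by means of $\mathbb{H}$. Since $\chi(g)^*B=B$ and $\chi(g)^*\mathbb{H}\cong\mathbb{H}$ are Zariski-closed conditions, the closure, and in particular $H$, still preserves $B$ and $\mathbb{H}$; being connected, $H$ acts trivially on the discrete monodromy of $\mathbb{H}$, so the induced automorphisms of the local system lie in the discrete centraliser of the monodromy and must be trivial. Thus the period map of $\mathbb{H}$ is constant along the orbits of $H$, the Hodge line bundle $L$ is flat on each orbit, and $J$ is numerically trivial on every $H$-orbit curve. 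Restricting the ample divisor $K_Y+B+J$ to the closure of a general orbit would then exhibit a log canonical divisor of $K+\text{boundary}$ type that is ample on an almost homogeneous variety carrying a nontrivial connected group action, contradicting the finiteness of automorphisms of pairs of log general type. The main obstacle is precisely this last paragraph: making rigorous the rigidity statement that $H$ fixes the period map, including the lift to the universal cover and the control of monodromy and polarization, and then turning the numerical triviality of $J$ on orbits into the positivity contradiction.
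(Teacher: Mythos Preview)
Your strategy is essentially the paper's: realise $\chi(\Bir(X,\Delta))$ inside $\PGL(N)$ via the ample class $K_Y+B+J$, take the Zariski closure $G$, and derive a contradiction from the existence of a one-parameter subgroup by showing $J$ is numerically trivial on a general orbit while $K_Y+B$ has nonpositive degree there. You also correctly flag that the hard part is the last paragraph.

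The one step you should not rely on is the claim that ``$\chi(g)^*\mathbb H\cong\mathbb H$ is a Zariski-closed condition''. Isomorphism of local systems is a condition on monodromy representations, not an algebraic condition on $\Aut(Y)$, and the paper does not argue this way. Instead it proceeds as follows: take an open set $Y_0\subset Y\setminus B$ over which $\mathbb H$ is nondegenerate and set $\tilde Y=\bigcup_{g}\chi(g)(Y_0)$; since $\tilde Y$ is invariant under the Zariski-dense subgroup $\chi(\Bir(X,\Delta))$, its (closed) complement is $G$-invariant, so $\tilde Y$ itself is $G$-invariant, and by Proposition~\eqref{l-lsyt} together with Remark~\eqref{r-loc} the local system $\mathbb H$ extends over all of $\tilde Y$. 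Now one only needs to rule out $\mathbb G_m$ and $\mathbb G_a$. For a general $\mathbb G_m$-orbit $o:\mathbb P^1\to Y$, the image of $\mathbb G_m$ lies in $\tilde Y$, so $o^*\mathbb H$ is defined on $\mathbb P^1\setminus\{0,\infty\}$; after a cyclic base change making the local monodromies unipotent, Deligne's semisimplicity theorem forces the pulled-back VHS to be trivial, whence $J|_{\mathbb P^1}\sim_{\mathbb Q}0$. The positivity contradiction is then Lemma~\eqref{lm-logbig}: for a faithful $\mathbb G_m$-action on a sub-lc pair $(Y,B)$ one has $\deg\psi_t^*(K_Y+B)\le 0$ on a general orbit, which together with $\deg o^*J=0$ contradicts the ampleness of $K_Y+B+J$. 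This replaces your appeal to ``finiteness of automorphisms of pairs of log general type'', which would be circular here, by a direct degree computation on $\mathbb P^1$.
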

First it is easy to see the following result.
\begin{lemma}\label{l-des}
The image of $\chi( {\rm Bir}(X,\Delta))$ is contained in ${\rm Aut}(Y,B)$.
\end{lemma}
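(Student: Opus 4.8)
The plan is to show that each $\chi(g)$ permutes the codimension one points of $Y$ in a way that preserves the coefficients of $B$, so that $\chi(g)_*B=B$. Recall from the canonical bundle formula that for a codimension one point $P\in Y$ the coefficient of $P$ in $B$ is
$$b_P=1-{\rm lct}_{\eta_P}(X,\Delta;f^{-1}(P)),$$
the log canonical threshold being computed over the generic point $\eta_P$ of $P$. Thus it suffices to prove that $b_P=b_{\chi(g)(P)}$ for every $g\in{\rm Bir}(X,\Delta)$ and every such $P$.

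Fix $g\in{\rm Bir}(X,\Delta)$ and a common resolution $Z$ with morphisms $p,q:Z\to X$ realizing $g=q\circ p^{-1}$ and satisfying the crepant condition $p^*(K_X+\Delta)=q^*(K_X+\Delta)$ from the definition of ${\rm Bir}(X,\Delta)$. The first step is to record the compatibility of $g$ with the fibration: since $f\circ g=\chi(g)\circ f$ as rational maps, we get $f\circ q=\chi(g)\circ(f\circ p)$, so that pulling back divisors from $Y$ gives $q^*f^*=p^*f^*\chi(g)^*$, and dually $g_*(f^*Q)=f^*(\chi(g)_*Q)$ for any divisor $Q$ on $Y$. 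In particular $g$ carries $f^{-1}(P)$ onto $f^{-1}(\chi(g)(P))$.

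The second and main step is the crepant invariance of the log canonical threshold. Working over a neighborhood of $\eta_P$, and after refining $Z$ so that it is a log resolution of both $(X,\Delta+f^{-1}(P))$ and $(X,\Delta+f^{-1}(\chi(g)(P)))$ (which may be arranged without disturbing the crepant condition, since further blow ups add the same terms to both sides), I would compute the two thresholds on $Z$. Writing $p^*(K_X+\Delta)=K_Z+\Gamma=q^*(K_X+\Delta)$, the threshold ${\rm lct}_{\eta_P}(X,\Delta;f^{-1}(P))$ is the largest $t$ for which $K_Z+\Gamma+t\,p^*f^{-1}(P)$ has all coefficients $\le 1$ over $\eta_P$, and likewise with $q$ in place of $p$ for $\chi(g)(P)$. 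Now $K_Z+\Gamma$ is literally the same divisor in both computations, while the relation $q^*f^*=p^*f^*\chi(g)^*$ from Step 1, together with $\chi(g)^*\chi(g)(P)=P$, gives $q^*f^{-1}(\chi(g)(P))=p^*f^{-1}(P)$. Hence the two threshold computations coincide, so $b_P=b_{\chi(g)(P)}$, which yields $\chi(g)_*B=B$ and therefore $\chi(g)\in{\rm Aut}(Y,B)$.

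I expect the only delicate point to be the bookkeeping in Step 2: one must ensure that the localization at $\eta_P$ is legitimate, so that only the divisors of $Z$ dominating or lying over $P$ enter the computation and no spurious exceptional contribution interferes, and that the identification $p^*f^{-1}(P)=q^*f^{-1}(\chi(g)(P))$ holds with the correct multiplicities rather than merely set theoretically. Everything else is formal once the crepant condition $p^*(K_X+\Delta)=q^*(K_X+\Delta)$ and the commuting square defining $\chi(g)$ are in hand; note that the moduli part $J$ plays no role in this lemma and is deferred to the analysis of the action on $\mathbb H$.
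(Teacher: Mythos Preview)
Your proposal is correct and follows essentially the same approach as the paper: the coefficient $b_P=1-{\rm lct}(X,\Delta;f^{-1}(P))$ is invariant under passing to a crepant log resolution, and then the commutative square relating $g$ and $\chi(g)$ forces $b_P=b_{\chi(g)(P)}$. The paper's proof is a two-line sketch of exactly this; your Step~2 simply unpacks the phrase ``this number is unchanged if we replace $(X,\Delta)$ by any log resolution'' into an explicit comparison on the common model $Z$.
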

\begin{proof}
Let $P\in Y$ be a codimension 1 point, as we noted, the coefficient of $P$ in $B$ is defined by $1-{\rm lct}(X,\Delta;f^{-1}(P))$. This number is unchanged if we replace $(X,\Delta )$ by any log resolution. Thus the lemma easily follows from the above commutative diagram, and the fact that $g$ is in ${\rm Bir}(X,\Delta)$.
\end{proof}

By the following result, we see that the birational maps also preserve $\mathbb{H}$, where  $\mathbb{H}$ is the local system defined in \eqref{loc},  
\begin{prop}\label{l-lsyt}
(1) Let $g\in {\rm Bir}(X,\Delta)$. If we assume $\mathbb{H}$ is defined over an open set $Y^0\subset Y$, then over $Y^0\cap\chi (g)^{-1} (Y^0) $, there exists an isomorphism $i_g: \chi (g)^*\mathbb{H}\cong \mathbb{H}$. 

\noindent (2) Let $g_1,g_2\in {\rm Bir}(X,\Delta)$, then over  $Y^0\cap \chi (g_1)^{-1} (Y^0) \cap \chi (g_2)^{-1} (Y^0)$ we have $i_{g_1}\circ i_{g_2}=i_{g_1\circ g_2}$.
\end{prop}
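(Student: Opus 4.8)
The plan is to deduce both statements from the resolution-independence established in \eqref{l-ind}, together with the single observation that a $\bf B$-birational map changes the fibration $f$ only by postcomposition with the base automorphism $\chi(g)$.

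First I would fix $g\in \Bir(X,\Delta)$ and choose a common log resolution $W$ of $g$, with morphisms $p,q\colon W\to X$ such that $g=q\circ p^{-1}$ and such that $W$ is a log resolution of $(X,\Delta)$ with respect to both $p$ and $q$. Since $g\in \Bir(X,\Delta)$ we have $p^*(K_X+\Delta)=q^*(K_X+\Delta)$. Writing these as $K_W+E_p+F_p-G_p$ and $K_W+E_q+F_q-G_q$ in the notation of \eqref{loc}, the uniqueness of the decomposition into the integral effective part $E$, the integral effective part $G$ having no common component with $E$, and the fractional part $F$, forces $E_p=E_q$, $F_p=F_q$, $G_p=G_q$. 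Hence Construction \eqref{loc} run through $p$ and through $q$ uses the same divisorial data, the same cyclic cover $W'\to W$ and the same local system $\mathbb{V}$; the only difference is the map to the base, where $f\circ g=\chi(g)\circ f$ gives $\phi_q=f\circ q=\chi(g)\circ\phi_p$.

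Next I would run \eqref{loc} twice. Viewed as two resolutions of the same $(X,\Delta)$ with the same $f$, both $(W,p)$ and $(W,q)$ produce, by \eqref{l-ind}, the canonical local system $\mathbb{H}$ on an open subset of $Y$. On the other hand, since $\chi(g)$ is an isomorphism, pushforward gives $R^{n}(\phi_q)_*\mathbb{V}=(\chi(g)^{-1})^*R^{n}(\phi_p)_*\mathbb{V}$, and the whole output of the $q$-construction (the Hodge filtration, the line bundle $L$, the weight filtration, and therefore the smallest pure sub-VHS containing $L$) is carried to that of the $p$-construction by $(\chi(g)^{-1})^*$; this base-isomorphism equivariance is immediate because the defining property of $\mathbb{H}$ as the smallest sub-VHS containing $L$ is preserved under pullback along an isomorphism. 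Thus the $q$-output equals $(\chi(g)^{-1})^*$ of the $p$-output, that is $(\chi(g)^{-1})^*\mathbb{H}$, while by \eqref{l-ind} the $q$-output is itself $\mathbb{H}$; comparing the two produces a distinguished isomorphism $i_g\colon \chi(g)^*\mathbb{H}\cong\mathbb{H}$ over $Y^0\cap\chi(g)^{-1}(Y^0)$, which is \eqref{l-lsyt}(1). Here it is important to use, as in the proof of \eqref{l-ind}, the \emph{same} unit of $\mathcal{O}_{Y^0}$ for $p$ and $q$, so that $i_g$ is canonical and not merely an abstract isomorphism.

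For \eqref{l-lsyt}(2) I would realize $g_1$, $g_2$ and $g_1\circ g_2$ on a single model and check the cocycle identity. Since $\chi$ is a homomorphism, $\chi(g_1\circ g_2)^*=\chi(g_2)^*\circ\chi(g_1)^*$, so the assertion $i_{g_1}\circ i_{g_2}=i_{g_1\circ g_2}$ is to be read as the equality $i_{g_2}\circ\chi(g_2)^*i_{g_1}=i_{g_1\circ g_2}$ of isomorphisms $\chi(g_1\circ g_2)^*\mathbb{H}\to\mathbb{H}$. Because each $i_g$ was defined above as the transport-of-structure isomorphism induced by $\chi(g)$ on the common object $R^{n}\phi_*\mathbb{V}$, and transport of structure is functorial in the base automorphism, the composite of the transports for $g_2$ and $g_1$ agrees with the transport for $\chi(g_1)\chi(g_2)=\chi(g_1\circ g_2)$; unwinding the canonical identifications then gives the stated relation. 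The main obstacle I expect is precisely this last point: ensuring that the scalar ambiguities in \eqref{loc} (the choice of unit, and the identifications of \eqref{l-ind}) cancel coherently, so that $\{i_g\}$ assembles into an honest action rather than a projective one — this strict compatibility is exactly what the finiteness argument in \eqref{t-f-image} will need.
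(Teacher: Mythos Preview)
Your proof is correct and follows essentially the same approach as the paper: take a common log resolution $W$ for $g$, observe that $p^*(K_X+\Delta)=q^*(K_X+\Delta)$ forces the same divisorial data and hence the same $\mathbb{V}$ on $W$, and then use $\phi_q=\chi(g)\circ\phi_p$ to compare the two pushforwards. The only difference is one of emphasis: where you invoke \eqref{l-ind} to say that both the $p$- and $q$-outputs equal the canonical $\mathbb{H}$ and then compare, the paper phrases this as ``$\chi(g)^*L\cong L$, hence by semi-simplicity of VHS the induced isomorphism $\chi(g)^*:R^n{\phi_1}_*\mathbb{V}\to R^n{\phi_2}_*\mathbb{V}$ carries $\mathbb{H}$ to $\mathbb{H}$''; these are two formulations of the same mechanism. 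For part (2) the paper is terser than you are (it simply notes $(\chi(g_1\circ g_2))^*L=L$), and your care about the scalar ambiguity and the cocycle reading of the identity is appropriate and consistent with what the paper needs later in \eqref{t-f-image}.
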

\begin{proof}
(1) Let $W$ be a common log resolution,
\begin{diagram}
& &W & &\\
&\ldTo^{p_1}& &\rdTo^{p_2}&\\
 X& &\rDashto^g& &X\\
\dTo^f& & & &\dTo^f\\
Y& &\rTo^{\chi (g)}& &Y
\end{diagram}

Since $p_1^*(K_X+\Delta)=p_2^*(K_{X}+\Delta)$,  we obtain the same local system on some open subset $ W^0\subset W$, whose image is an open subset $Y^0\subset Y$.
We can shrink $Y^0$, so that $R^n {\phi_1}_*(\mathbb{V})$ (resp. $R^n {\phi_2}_*(\mathbb{V})$) is defined over $\chi(g)^{-1} (Y^0) $ (resp. $Y^0$), where we denote $f\circ  p_i$ by $\phi_i$.

Since $p_1^*(K_{X}+\Delta)=p_2^*(K_{X}+\Delta)$, we conclude $\chi(g)^*(L)\cong L$.
Therefore, because of the semi-simplicity of VHS, the isomorphism $$\chi(g)^{*} :R^n{\phi_1}_*(\mathbb{V})\to R^n{\phi_2}_*(\mathbb{V})$$ which sends $L$ to $L$, will send $\mathbb{H}$ to $\mathbb{H}$.

(2) This also follows by a similar argument since $(\chi (g_1\circ g_2))^{*}L=L$.
\end{proof}

 Since $\chi({\rm Bir}(X,\Delta))$ preserves the polarization on $Y$, it is 
a subgroup of ${\rm PGL}(N)$ for some $N$. Let $G$ be the closure of  $\chi({\rm Bir}(X,\Delta))$ in ${\rm PGL}(N)$, in particular $G$ is an algebraic group. Thus, to show that $\chi({\rm Bir}(X,\Delta)$ is finite, 
we only need to verify that $G$ does not contain  $\G_a$ or $\G_m$.
We will show that $G$ does not contain $\G_m$. (The argument for $\mathbb{G}_a$ is similar and we leave it to the reader.)

Choose $Y_0\subset Y\setminus B$ to be an open set which the Hodge structure $\mathbb{H}$ does not degenerate, and define an open set
$$\tilde{Y}=\bigcup_{g\in \chi({\rm Bir}(X,\Delta))}  g(Y_0).$$ Therefore, $\tilde{Y}$ is invariant under the action of $G$ as it is invariant under the Zariski dense subgroup $\chi({\rm Bir}(X,\Delta))$ of $G$. By \eqref{l-lsyt} and \eqref{r-loc}, $\mathbb{H}$ is non-degenerate over $\tilde{Y}$.

Then for a general point $z\in \tilde{Y}$, we consider the closure of the orbit $o:\P^1\to Y$ of $\G_m\cdot z$. Since $\tilde{Y}$ is $G$-invariant, we know that the pull back $o^*\mathbb{H}$ is well defined over $\G_m\subset \P^1$.
Let $\pi:Y'\to Y$ be a $G$-equivariant resolution of $(Y,Y\setminus \tilde{Y})$, thus $\mathbb{H}'=\pi^*\mathbb{H}$ is defined outside a simple normal crossing divisor (see \cite{Kollar07b}). We can write $\pi^*(K_Y+B+J)=K_{Y'}+B'+J'$ where $(Y',B'+J')$ is the decomposition which the Kawamata subadjunction formula yields on $Y'$. In particular, $(Y',B')$ is sub log canonical. 

Let $\phi:Z\to Y'$ be  a branched covering such that $\phi^*\mathbb{H}'$ has unipotent monodromies and hence admits a canonical extension over $Z$ (see \cite[8.10.10]{Kollar07}). Let $\phi_C:C\to \P^1$ be the normalization of $\mathbb{P}^1\times_{Y} Z$. As $o(\P^1)$ is the compactification of a general orbit, we know that $\phi_C$ is of degree $d$. Then we know that $\phi_C^*(o^*\mathbb{H})$ has a canonical extension from the preimage of $\phi_C^{-1}(\mathbb{G}_m)$ to $C$ which coincides with the restriction of the canonical extension of $\phi^*\mathbb{H}'$ over $C$ (see \cite{Deligne70}, \cite[8.10.8]{Kollar07}). Therefore, if $J_Z=\bar{E}^{n+i,0}(\phi^*\mathbb{H}')$, then $$J_C=\bar{E}^{n+i,0}(\phi^*\mathbb{H}'|_{C})=\bar{E}^{n+i,0}(\phi^*\mathbb{H}') |_C=J_Z|_C.$$


 As $\mathbb{G}_m \cong \mathbb{A}^1\setminus\{0 \}\subset \mathbb{P}^1$. There exists a ramified cover $d:\P^1\to \P^1$, such that if we denote by $o_d=o \circ d$, then $o_d^*(\mathbb{H})$ is defined over $\A^1\setminus\{0\}$, and it has unipotent monodromies near $0$ and $\infty$. As $o^*_d(\mathbb{H})$ is non-degenerate on $\mathbb{P}^1\setminus\{0,\infty \}$,  $o_d^*\mathbb{H}$ is trivial by Deligne's  semi-simple theorem (see \cite[4.2.6]{Deligne71}). Since $\bar{E}^{n+i,0}(o_d^*(\mathbb{H}))=\mathcal O _{\mathbb P ^1}(J_d)$, we have
 $$\frac{1}{d}d_*J_d\sim_{\mathbb{Q}}\frac{1}{\deg\phi_C}\phi_{C*}J_C\sim _\Q J'|_{\mathbb{P}^1}=0,$$
 where the first equality follows from the fact that the moduli part is well defined for any choice of  base change such that the monodromy is unipotent, and since $\phi$ is finite the second equality follows from restricting both sides of $\frac{1}{{\rm deg}\phi}(\phi_*J_Z)\sim_{\mathbb{Q}} J'$ under the lifting $\mathbb{P}^1\to Y'$ of $o$.

  On the other hand, by \eqref{lm-logbig}, we have 
$$\deg(o^*(K_Y+B+J))=\deg(o^*(K_Y+B))\leq 0.$$
 Since $K_Y+B+J$ is ample, this is a contradiction and so this completes the proof of \eqref{t-f-image}. 


\begin{lemma}\label{lm-logbig}
Let $(Y,B)$ be a sub log canonical pair and  $\psi: \G_m\times (Y,B) \to (Y,B)$  a faithful action.  For general $t\in Y$, if we denote by $\psi_t:\P^1\times\{t\}\to Y$ the closure of the orbit, then  $\deg \psi_t^*(K_Y+B)\le0$.
\end{lemma}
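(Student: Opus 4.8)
The plan is to reduce to the case of a log smooth pair with reduced boundary, and then to read off the sign of $\deg\psi_t^*(K_Y+B)$ from the $\G_m$-weights at the two fixed points of the orbit closure $\overline{\G_m\cdot t}\cong\P^1$, via the Bialynicki--Birula decomposition of $Y$.

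First I would take a $\G_m$-equivariant log resolution $\mu\colon (\tilde Y,\tilde B)\to (Y,B)$ (possible since the action is algebraic) and write $\mu^*(K_Y+B)=K_{\tilde Y}+\tilde B^{\mathrm{sub}}$ for the crepant pullback. Because $(Y,B)$ is sub log canonical, every coefficient of $\tilde B^{\mathrm{sub}}$ is $\le 1$, so if $\tilde B_{\red}$ denotes the reduced divisor supported on $\supp(\tilde B^{\mathrm{sub}})\cup {\rm Ex}(\mu)$, then $\tilde B_{\red}-\tilde B^{\mathrm{sub}}\ge 0$. For general $t$ the orbit closure meets the (invariant) isomorphism locus of $\mu$ and thus lifts to a general orbit closure $\tilde\psi_t$ on $\tilde Y$ for the lifted action, which is still faithful and preserves $\tilde B_{\red}$. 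Hence
$$\deg\psi_t^*(K_Y+B)=\deg\tilde\psi_t^*(K_{\tilde Y}+\tilde B^{\mathrm{sub}})\le \deg\tilde\psi_t^*(K_{\tilde Y}+\tilde B_{\red}).$$
This is precisely where sub log canonicity is used: it makes the correction divisor effective, so the comparison runs in the right direction. It therefore suffices to treat a log smooth pair $(Y,B)$ with $B$ reduced and simple normal crossing and $\G_m$ preserving $B$.

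In the log smooth case I would use the identity $\det T_Y(-\log B)=\sO_Y(-(K_Y+B))$. Since $\nu:=\psi_t\colon \P^1\to Y$ is $\G_m$-equivariant, $\mathcal M:=\det\nu^*T_Y(-\log B)$ is a $\G_m$-equivariant line bundle on $\P^1$ with $\deg\mathcal M=-\deg\nu^*(K_Y+B)$, and for such a bundle the degree equals the difference $w_0(\mathcal M)-w_\infty(\mathcal M)$ of the weights of its fibres at the two fixed points $0,\infty\in\P^1$, which map to $p_0=\lim_{s\to 0}s\cdot t$ and $p_\infty=\lim_{s\to\infty}s\cdot t$. As $B$ is invariant, at each fixed point $p$ one may choose eigen-coordinates in which the components of $B$ are coordinate hyperplanes; a direct computation then shows that the fibre of $\det T_pY(-\log B)$ has weight $\sigma(p):=\sum_j a_j(p)$, the sum of the tangent weights $a_j(p)$ over the directions transverse to $B$ (the logarithmic directions along $B$ each contribute weight $0$). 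Thus $\deg\nu^*(K_Y+B)=\sigma(p_\infty)-\sigma(p_0)$.

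Finally I would invoke the Bialynicki--Birula decomposition: for general $t$ the limit $p_0$ lies in the sink of the open attracting cell, where every tangent weight transverse to the fixed component is positive, so $\sigma(p_0)\ge 0$; symmetrically $p_\infty$ lies in the source of the open repelling cell, where these weights are negative, so $\sigma(p_\infty)\le 0$. Hence $\deg\nu^*(K_Y+B)=\sigma(p_\infty)-\sigma(p_0)\le 0$, as required. The step I expect to be the main obstacle is the correct bookkeeping of weight and sign conventions: verifying $\deg\mathcal M=w_0-w_\infty$ with the covariant $\G_m$-action on the tangent bundle, and confirming that a general $t$ lies simultaneously in the open attracting and repelling cells, so that $p_0$ and $p_\infty$ genuinely carry the claimed extremal weights. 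By contrast, the reduction step is routine once sub log canonicity is used as above.
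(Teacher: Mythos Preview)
Your reduction to the log smooth reduced-boundary case via an equivariant resolution parallels the paper's reduction (the paper uses an arbitrary log resolution and lifts only the orbit map, not the action, but the use of sub log canonicity to pass from $\tilde B^{\mathrm{sub}}$ to a reduced boundary is identical). After this point the two arguments genuinely diverge. The paper sweeps the lifted orbit closures into a generically finite family $\phi_T:\P^1\times T\to Y'$ with $\phi_T^{-1}(B')\subset\{0,\infty\}\times T$ and invokes the elementary log-differential inequality $\phi_T^*(K_{Y'}+B')\le K_{\P^1\times T}+\{0,\infty\}\times T$ for a generically \'etale map of log smooth pairs whose boundary preimage lies in the boundary; restricting to a fibre gives $\deg\phi_t^*(K_{Y'}+B')\le\deg(K_{\P^1}+\{0\}+\{\infty\})=0$. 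Your route through the Bialynicki--Birula decomposition and the weight formula $\deg\mathcal M=w_0-w_\infty$ is correct: the sign conventions you flag do work out (e.g.\ test on $\mathcal M=T_{\P^1}$), and a general $t$ lies in the intersection of the open plus- and minus-cells since both are dense in a smooth proper variety. The trade-off is portability: in the paper the same conclusion is also needed for a $\G_a$-action, and the family argument adapts verbatim by replacing $\{0,\infty\}$ with $\{\infty\}$, whereas Bialynicki--Birula is specific to tori and your argument would require a separate treatment of the $\G_a$ case.
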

\begin{proof}
Let $\pi:Y'\to Y$ be a log resolution of $(Y,B)$ and $B'={\rm Supp}( \pi^{-1}_*(B)+{\rm Ex}(\pi))$. 
We may assume that $\pi ({\rm Ex}(\pi))$ is contained in the union of
${\rm Supp}(B)$ and the singular locus of $Y$. 
Thus, for a general point $t\in Y$, we have that 
$$\mathbb G _m\cdot t \cap ({\rm Supp}(B)\cup {\rm \pi(Ex}(\pi)))=\emptyset.$$ We then have the induced morphisms $$(\mathbb A ^1-\{ 0 \})\to Y\setminus ({\rm Supp}(B)\cup {\rm Ex}(\pi)))\qquad {\rm and}\qquad \phi _t:\mathbb P ^1 \to Y'.$$
 Since $(Y,B)$ is log canonical, $\pi^*(K_{Y}+B)\le K_{Y'}+B'$.  We may assume that there is a smooth open set $T$ and generically finite morphism, $\phi_T: \P^1\times T\to Y'$ such that $\phi_T^{-1}(B')\subset \{0,\infty\}\times T$, and  $\phi_T|_{t\times \P ^1}=\phi_t$. Then it is easy to see by a local computation that
 $$\phi_T^*(K_{Y'}+B')\le K_{T\times \P^1} +T\times\{0\}+T\times \{\infty\}.$$
Therefore, we conclude that $$\deg \psi_t^*(K_Y+B)\le\deg  \phi _t^*(K_{Y'}+B')\leq \P^1\cdot (K_{T\times \P^1} +T\times\{0\}+T\times \{\infty\} ),$$ which is computed by $\deg (K_{\mathbb P^1}+\{0\}+\{\infty\})=0.$
\end{proof}

\section{Abundance for slc pairs}
In this section, we will study the semi-log canonical abundance. By \cite{Fujino00}, it is known that if $S$ is a point, then \eqref{t-fb} implies \eqref{slc}. By \cite{FG11},  \eqref{slc} is also known when $S$ is projective.
The gluing theory for log canonical centers, developed by J. Koll\'ar (cf. \cite[Chapter 3]{Kollar11}), provides a very powerful tool to study semi-log canonical varieties and allows us to prove \eqref{slc} in full generality.

More specifically, Koll\'ar's recent theory of giving any lc center  a source and a spring provides a useful tool for checking that the pro-finite equivalence relation is finite (cf. \cite{Kollar11b}).
We note that, this is not true for general pro-finite equivalence relations. The fact that we are gluing lc centers for relatively projective pairs plays an essential role here. 

First, the main theorem for abstract gluing theory is the following.

\begin{theorem}[{\cite[3.40]{Kollar11}}]\label{t-quotient}
 Let $(X, S_*)$ be an excellent scheme or algebraic space over a
field of characteristic 0 with a stratification. Assume that
$(X, S_{*})$ satisfies the conditions (HN) and (HSN). Let $R
\rightrightarrows X$ be a finite, set theoretic, stratified
equivalence relation. Then
\begin{enumerate}
\item the geometric quotient $X/R$ exists,
\item $\pi : X \to X/R$ is stratifiable and
\item $(X/R, \pi_*S_*)$ also satisfies the conditions (HN) and (HSN).
\end{enumerate}
\end{theorem}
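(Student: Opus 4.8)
The plan is to construct $X/R$ by Noetherian induction on the stratification $S_*$, at each stage reducing to the problem of gluing the quotient of an open stratum to the already-constructed quotient of its closed complement. Since $R \rightrightarrows X$ is finite and set-theoretic, the quotient as a topological space exists and the projection $|X| \to |X|/R$ is finite; the real content of the theorem is to endow this quotient with a scheme (or algebraic space) structure making $\pi$ a finite morphism, and then to check that the stratification and the hereditary conditions descend. I would first treat the affine situation locally: for $X = \Spec A$, define $B \subseteq A$ to be the equalizer of the two pullback maps $A \rightrightarrows \Gamma(R, \mathcal{O}_R)$ induced by the source and target morphisms of $R$. The candidate quotient is then $\Spec B$, and the two essential local claims are that $A$ is finite as a $B$-module and that the relation induced on $\Spec B$ is trivial, so that $\Spec A \to \Spec B$ genuinely realizes the set-theoretic quotient. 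Finiteness of $A$ over $B$ is where the finiteness of $R$ and a careful integral-dependence argument are needed.

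Next I would use the stratified structure to globalize. Because $R$ is a stratified equivalence relation, it restricts to finite equivalence relations $R_i$ on each stratum $S_i$, and the open dense stratum is normal (resp. seminormal) by (HN) (resp. (HSN)). On this open stratum the quotient is the cleanest case, essentially a finite equivalence relation on a normal scheme, for which the local affine construction above glues without obstruction. The inductive step then assumes that the quotient $Z/R$ of the closed complement $Z$ has been constructed together with its stratification and hereditary conditions, and seeks to build $X/R$ as the pushout (amalgamated sum) of the quotient of the open part and $Z/R$ along their common boundary data in the quotient.

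The main obstacle, and the place where the hypotheses (HN) and (HSN) are indispensable, is the existence and good behavior of this pushout. Pushouts of schemes along finite morphisms and closed immersions do not exist in general as schemes; what rescues the construction is seminormality. Condition (HSN) is precisely what guarantees that the amalgamation exists as a scheme or algebraic space and represents the quotient rather than merely producing a ringed space, while (HN) allows one to first perform the construction after normalization and then descend to the seminormal quotient in a controlled way. Once the pushout is shown to exist, verifying that $\pi : X \to X/R$ is finite and stratifiable follows from the compatibility of the construction with the stratification, and the preservation of (HN) and (HSN) for $X/R$ follows because the amalgamation commutes with passing to normalization and seminormalization. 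Thus the heart of the argument is an existence-of-pushout statement for seminormal objects, together with the bookkeeping needed to propagate the hereditary conditions through each stage of the induction.
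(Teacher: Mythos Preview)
The paper does not contain a proof of this theorem. It is quoted verbatim from Koll\'ar's book (cited as \cite[3.40]{Kollar11}) and used as a black box in the proof of Theorem~\ref{slc}. There is therefore nothing in the present paper to compare your proposal against.

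Your sketch is a reasonable outline of the general strategy one finds in Koll\'ar's treatment: Noetherian induction along the stratification, local construction of the quotient via invariants, and use of the seminormality hypothesis (HSN) to make the gluing/pushout step succeed. That said, two points deserve caution. First, your affine description of the quotient as $\Spec$ of the equalizer $B$ of the two pullbacks $A \rightrightarrows \Gamma(R,\mathcal{O}_R)$ glosses over the genuine difficulty: for a merely \emph{set-theoretic} equivalence relation (as opposed to a categorical one), this ring-theoretic equalizer need not have the correct spectrum, and finiteness of $A$ over $B$ is not automatic; Koll\'ar's argument has to work harder here, and this is exactly where the stratification and the inductive set-up enter rather than being a separate globalization step. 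Second, your account of (HN) and (HSN) is slightly off in emphasis: the role of (HN) is to guarantee that the normalization of each stratum-closure is itself a union of strata, which is what makes the inductive scheme compatible with passing to normalizations, while (HSN) is what ensures the pushouts are representable. If you want to actually write out the proof, you should consult \cite[Chapter 3]{Kollar11} directly rather than reconstruct it; for the purposes of this paper, the theorem is simply invoked.
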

We now recall the following notation, which was first introduced by F. Ambro (cf. \cite{Ambro03}).  

\begin{defn}\label{d-cls} We call $f:(X,\Delta)\to Y$ {\it a crepant log structure} if
\begin{enumerate}
\item $(X,\Delta)$ is log canonical,
\item $f$ is projective, surjective, with connected fibers,
\item $K_X+\Delta\sim_{\mathbb{Q},f}0 $.
\end{enumerate}
We note that, by taking a dlt modification of $(X,\Delta)$ (cf. \cite[3.1]{KK10}), we can assume that $(X,\Delta)$ is dlt.
\end{defn}

As in \cite[Section 3]{HX11}, we let $\bar g : \bar X\to \bar Y$ be the morphism over $S$ induced by the relatively semiample $\Q$-divisor $K_{\bar X}+\bar \Delta +\bar D$ and we consider the pro-finite equivalence relation 
$$(\sigma_1,\sigma_2): \bar{T}\rightrightarrows \bar{Y}$$
given by the image of the pro-finite relation $D^n\rightrightarrows \bar{X}$.  Recall that $D^n$ denotes the normalization of $\bar{D}$ on $\bar{X}$. Note that 
 $\bar{T}$ and $\bar{Y}$ are normal.   We may assume that $\bar g$ is induced by $|m(K_{\bar X}+\bar \Delta +\bar D)|$ for some sufficiently divisible positive integer $m$. 
If we write $K_{D^n}+\Theta=(K_{\bar{X}}+\bar{\Delta}+\bar{D})|_{D^n} $, then $(D^n,\Theta)$ is also log canonical.

As in \cite[Section 3]{HX11}, we consider the minimal qlc stratifications $S_*\bar{T}$ and $S_*\bar{Y}$ given by the crepant log structures $(D^n,\Theta)\to \bar{T}$ and $(\bar{X},\bar{\Delta}+\bar{D})\to \bar{Y}$. We note that if we let $(X^d,\Delta^d)$ be a dlt modification of 
$(\bar{X},\bar{\Delta}+\bar{D})$ (cf. \cite[3.1]{KK10}), then $$(X^d,\Delta^d)\to (\bar{X},\bar{\Delta}+\bar{D})\to \bar{Y}$$ gives the same minimal qlc stratification on $\bar Y$.

\begin{prop}We have the following facts.
\begin{enumerate} 
\item The stratifications $S_*\bar{T}$ and $S_*\bar{Y}$ satisfy conditions (HN) and (HSN).
\item The pro-finite relation  $(\sigma_1,\sigma_2):\bar{T}\rightrightarrows \bar{Y}$ is stratified.
\end{enumerate}
\end{prop}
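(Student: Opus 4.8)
The plan is to verify both statements by reducing them to the structural properties of minimal qlc stratifications established in \cite[Section 3]{HX11} and to Koll\'ar's theory of lc centers \cite{Kollar11}, the two essential geometric inputs being the normality of the images of lc centers under a crepant log structure (cf. \eqref{d-cls}) and the Du Bois, hence seminormal, nature of lc singularities. Once both parts are in place, the stratified quotient theorem \eqref{t-quotient} will apply.

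For part (1), I would first treat $S_*\bar{Y}$, the case of $S_*\bar{T}$ being identical with $(D^n,\Theta)\to\bar{T}$ in place of $(\bar{X},\bar{\Delta}+\bar{D})\to\bar{Y}$. Recall that the strata of $S_*\bar{Y}$ are the images $\bar{g}(Z)$ of the lc centers $Z$ of the crepant log structure $(\bar{X},\bar{\Delta}+\bar{D})\to\bar{Y}$; passing to the dlt modification $(X^d,\Delta^d)$, which by the discussion preceding the statement yields the same stratification, we may assume this log structure is dlt, so that the centers $Z$ are normal. The normality clause of condition (HN) then amounts to the assertion that each image $\bar{g}(Z)$ is normal, which follows from Ambro's adjunction for crepant log structures together with the normality of qlc strata. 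The hereditary clause follows by induction on dimension: restricting the crepant log structure to a stratum and applying adjunction again produces a crepant log structure $(Z,\Delta_Z)\to\bar{g}(Z)$ whose minimal qlc stratification is the induced one, so that normality propagates to every stratum. Finally, (HSN) reduces to the seminormality of the strata and of their unions; since lc pairs are Du Bois, the relevant qlc strata and their unions are seminormal, and this again propagates hereditarily by adjunction. The identical argument applied to $(D^n,\Theta)\to\bar{T}$ gives (HN) and (HSN) for $S_*\bar{T}$.

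For part (2), recall that $(\sigma_1,\sigma_2)$ is the descent, via the contractions $\bar{g}:\bar{X}\to\bar{Y}$ and $\bar{g}_D:D^n\to\bar{T}$, of the two maps $D^n\rightrightarrows\bar{X}$, namely the natural map $\iota:D^n\to\bar{D}\hookrightarrow\bar{X}$ and its composite $\iota\circ\tau$ with the gluing involution $\tau$ of $D^n$. To show that $(\sigma_1,\sigma_2)$ is stratified I would track lc centers along these two maps. By (inversion of) adjunction for the divisor $\bar{D}$, the lc centers of $(D^n,\Theta)$ are precisely the lc centers of $(\bar{X},\bar{\Delta}+\bar{D})$ contained in $\bar{D}$, so $\iota$ carries qlc strata of $(D^n,\Theta)\to\bar{T}$ into qlc strata of $(\bar{X},\bar{\Delta}+\bar{D})\to\bar{Y}$; moreover $\tau$ is crepant for $(D^n,\Theta)$ and hence permutes its lc centers. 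Composing with $\bar{g}$ and descending, both $\sigma_1$ and $\sigma_2$ send strata of $S_*\bar{T}$ into strata of $S_*\bar{Y}$, which is exactly the assertion that the relation is stratified.

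The main obstacle is the verification of the hereditary conditions in part (1): it is not enough that the ambient spaces and the individual strata be normal and seminormal, one must know that these properties persist after restriction to every stratum, and this rests on the nontrivial fact that adjunction for the contraction of an lc center again produces a crepant log structure inducing the minimal qlc stratification, together with the normality of images of lc centers. Controlling the seminormality of arbitrary unions of strata, as opposed to single strata, via Du Bois theory is the other delicate point. Once these are in place, the compatibility statement (2) is comparatively formal, being essentially a bookkeeping of lc centers under crepant maps.
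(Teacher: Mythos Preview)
Your proposal is correct and amounts to unpacking exactly what the paper's proof cites: the paper handles (1) by invoking \cite[5.7]{KK10}, whose content is precisely the normality of qlc strata and the seminormality of their unions via the Du Bois property, propagated hereditarily through adjunction as you outline; and it handles (2) by invoking \cite[3.11]{HX11}, itself resting on \cite[1.7]{KK10}, which is the compatibility of lc centers under adjunction to $\bar D$ and under the crepant involution $\tau$ that you track. So your approach is the same as the paper's, only with the content of the cited results spelled out rather than quoted.
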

\begin{proof}The first statement follows from \cite[5.7]{KK10} and the second one follows from \cite[3.11]{HX11} which is a consequence of \cite[1.7]{KK10}. 
\end{proof}

By \eqref{t-quotient}, to prove that the quotient $\bar Y/\bar T$ exists, we only need to verify that $\bar{T}\rightrightarrows\bar{Y}$ generates a finite relation $R\to \bar{Y}$. By \cite[3.61]{Kollar11}, it suffices to check this over the generic point of each stratum $V^0$ of $S_i\bar{Y}$.
 We work over the generic point of $\bar f (V^0)$, where $\bar f:\bar Y\to S$ is the induced morphism. 
Let $\eta\in S$ be one such point and $\bar{\eta}$ its algebraic closure. We must verify that the equivalence relation $R\times_S\bar\eta\subset\bar{Y}\times \bar{Y}\times_S \bar\eta$ is finite over $V^0\times\bar{\eta}$. 

Let $Z$ be one of the minimal lc centers of $(X^d,\Delta^d)$ which dominates the closure $V$ of $V^0$, then $(Z,{\rm Diff}^*_{Z}\Delta^d)$ is dlt and its restriction over $V^0$ is klt. We take the Stein factorization $Z \to \tilde{V}\to V $. We will need to following results from \cite{Kollar11b}.
\begin{prop}[{\cite[1]{Kollar11b}}]\label{p-equ}
For different choices of the minimal non-klt centers $Z$ and $Z'$, we have
\begin{enumerate}
\item $(Z,{\rm Diff}^*_{Z}\Delta^d)$ is {\bf B}-birational to $(Z',{\rm Diff}^*_{Z'}\Delta^d)$
over $V$, and
\item $\tilde{V}$ is isomorphic to $\tilde{V}'$ over $V$.
\end{enumerate}
\end{prop}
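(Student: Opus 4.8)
The plan is to derive both assertions from Koll\'ar's theory of \emph{sources and springs} of log canonical centers developed in \cite{Kollar11b}, applied to the crepant log structure $(X^d,\Delta^d)\to \bar Y$, for which $K_{X^d}+\Delta^d\sim_{\Q,\bar Y}0$ by construction. Since finiteness of the relation $R$ need only be checked over the generic point of each stratum, I would first work over the generic point $\eta$ of $V$ and its algebraic closure, so that $Z$ and $Z'$ become minimal lc centers in a fixed fiber and, by adjunction, $(Z,{\rm Diff}^*_Z\Delta^d)$ and $(Z',{\rm Diff}^*_{Z'}\Delta^d)$ are klt crepant log structures over $\tilde V$ and $\tilde V'$. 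Statement (1) then amounts to the assertion that these two klt pairs are {\bf B}-birational over $V$, and (2) to the coincidence of the finite covers $\tilde V\to V$ and $\tilde V'\to V$; both are generic statements and, since $V$ is normal and the covers are finite, they extend from the generic point back to $V$.

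The key step is to connect $Z$ to $Z'$ by a chain of minimal lc centers along which a crepant birational equivalence is visible. Because $-(K_{X^d}+\Delta^d)\sim_{\Q,\bar Y}0$ is nef over $\bar Y$, the Koll\'ar--Shokurov connectedness theorem shows that the non-klt locus of $(X^d,\Delta^d)$ is connected in a neighborhood of the generic point of $V$. Hence there is a chain $Z=Z_0,Z_1,\dots,Z_k=Z'$ of minimal lc centers over $V$ in which consecutive members have intersecting closures. For each consecutive pair I would produce a common lc center $W$ of one higher dimension dominating both: in the dlt pair $W$ is covered by a component of $\lfloor\Delta^d\rfloor$ and exhibits $Z_i$ and $Z_{i+1}$ as the two coefficient-one boundary sections of a generically $\P^1$-bundle, that is, a $\P^1$-link. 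The crepancy of $K_{X^d}+\Delta^d$ along $W$ then forces the induced birational map $Z_i\dashrightarrow Z_{i+1}$ to be crepant, i.e. {\bf B}-birational, matching the differents ${\rm Diff}^*_{Z_i}\Delta^d$ and ${\rm Diff}^*_{Z_{i+1}}\Delta^d$.

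Composing the chain yields the {\bf B}-birational equivalence over $V$ required in (1). For (2), each crepant map in the chain commutes with the structure morphisms to $V$ and is birational over $V$, so it carries the generic fiber of $Z_i\to V$ to that of $Z_{i+1}\to V$; since $\tilde V$ is the finite part of the Stein factorization of $Z_i\to V$ and is therefore determined by the connected-component structure of these fibers, the map descends to an isomorphism $\tilde V_i\cong \tilde V_{i+1}$ over $V$. Chaining the isomorphisms gives $\tilde V\cong \tilde V'$.

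The main obstacle is the $\P^1$-link step: one must check that adjacent minimal lc centers are genuinely linked by a family of rational curves and that the resulting gluing map is crepant, so that the differents transform correctly. This is precisely where the dlt hypothesis, a tie-breaking argument, and the full strength of Koll\'ar's machinery in \cite{Kollar11b} are needed. A secondary point requiring care is to ensure that the connectedness of the non-klt locus, and hence the linking chain, survives restriction to the fiber over the generic point of $V$.
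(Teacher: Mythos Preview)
The paper does not supply its own proof of this proposition; it is simply quoted from \cite[1]{Kollar11b} and used as a black box. So there is no in-paper argument to compare against.

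Your outline is a faithful sketch of Koll\'ar's actual proof in \cite{Kollar11b}: reduce to the generic point of $V$, use connectedness of the non-klt locus (which holds since $K_{X^d}+\Delta^d\sim_{\Q,\bar Y}0$) to link any two minimal lc centers over $V$ by a chain, and then analyze each adjacent pair via a $\P^1$-link to produce a crepant birational map matching the differents; part (2) follows by reading off the Stein factorizations. You have also correctly identified the delicate point, namely establishing that adjacent minimal centers really are joined by a $\P^1$-link with a crepant involution. One small imprecision: the intermediate center $W$ in the $\P^1$-link is not an lc center ``dominating'' $Z_i$ and $Z_{i+1}$ in the usual sense; rather $W$ is a one-higher-dimensional lc center containing both, and after a suitable crepant modification it becomes a $\P^1$-bundle over a base with $Z_i$ and $Z_{i+1}$ as the two horizontal sections. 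With that clarification, your plan matches Koll\'ar's argument.
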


\begin{defn}[{\cite[19]{Kollar11b}}]
We denote by ${\rm Src}(V,X^d,\Delta^d):=(Z,{\rm Diff}^*_{Z}\Delta^d)$ a {\it source} of the lc center $V $ and ${\rm Spr}(V,X^d,\Delta^d):=\tilde{V}$ its {\it spring}. As in \eqref{p-equ}, ${\rm Src}(V,X^d,\Delta^d)$ is determined up to {\bf B}-equivalence.
\end{defn}

Let $V^0_{ij}$ be an $i$-dimensional stratum, and $\tilde{V}^0_{ij}$ the pre-image of $V^0_{ij}$ under the morphism $\tilde{V}_{ij}\to V_{ij}$.  
We define ${\rm Spr}_i(\bar Y, X^d,\Delta ^d)=\amalg_{j} \tilde{V}^0_{ij} $, where the disjoint union runs over all $i$-dimensional strata $V^0_{ij}$.
Then the main structural result is the following.
\begin{thm}[{\cite[33]{Kollar11b}}]
Let $R\subset \bar{Y}\times \bar{Y}$ be the relation generated by $\bar{T}\rightrightarrows \bar{Y}$ as above. Let $p_i: {\rm Spr}_i(\bar{Y}, X^d,\Delta^d)\to S_i\bar{Y}$ be the induced finite morphisms. Let $\bar{\eta}_{ij}$ be the algebraic closure of the generic point of $\bar f (V_{ij})$. 
Then 
 $$((p_i\times p_i)^{-1}(R\cap (S_i\bar{Y}\times S_i\bar{Y})) \times_S \bar{\eta}_{ij}) \cap( \tilde{V}^0_{ij}\times \tilde{V}^0_{ij} \times_S \bar{\eta}_{ij})$$ 
is a subset of the graph $\cup_g \Gamma(\chi(g))$ for all $g\in {\rm Bir} (Z_{\bar{\eta}_{ij}} ,{\rm Diff}^*_{Z_{\bar{\eta}_{ij} }}\Delta^d)$.
\end{thm}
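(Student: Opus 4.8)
The plan is to work over the generic point of each stratum and to show that the gluing relation, once transported to the spring, is realized by the induced automorphisms $\chi(g)$ coming from the {\bf B}-birational self-maps of the source. By \cite[3.61]{Kollar11} it suffices to argue over $\bar\eta_{ij}$, so I fix the $i$-dimensional stratum $V^0=V^0_{ij}$, a minimal lc center $Z=Z_{\bar\eta_{ij}}$ of $(X^d,\Delta^d)$ dominating $V=V_{ij}$, and the Stein factorization $Z\to\tilde V\to V$ with $\tilde V=\tilde V^0_{ij}$ the spring. The goal is then to show that whenever $(\tilde v_1,\tilde v_2)\in\tilde V\times\tilde V$ has image $(v_1,v_2)\in R\cap(S_i\bar Y\times S_i\bar Y)$, there is some $g\in{\rm Bir}(Z,{\rm Diff}^*_Z\Delta^d)$ with $\tilde v_2=\chi(g)(\tilde v_1)$.

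The heart of the argument is to lift the generators of $R$ to {\bf B}-birational maps. Recall that $R$ is generated by $(\sigma_1,\sigma_2):\bar T\rightrightarrows\bar Y$, the image of the double-locus relation $D^n\rightrightarrows\bar X$, and that $D^n$ carries the involution $\tau$ interchanging the two branches of $\bar D$. For an elementary relation, that is a point $t\in\bar T$ identifying $\sigma_1(t)$ with $\sigma_2(t)$, the corresponding minimal lc center $W$ of $(D^n,\Theta)$ over $t$ is also a minimal lc center of $(\bar X,\bar\Delta+\bar D)$, so its source is {\bf B}-birational to the sources lying over $\sigma_1(t)$ and over $\sigma_2(t)$; the involution $\tau$ then supplies a {\bf B}-birational identification between these two sources. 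By the uniqueness of sources up to {\bf B}-equivalence in \eqref{p-equ}(1) together with the canonical isomorphism of springs in \eqref{p-equ}(2), each such identification can be rigidified against the fixed source $Z$ to produce an element of ${\rm Bir}(Z,{\rm Diff}^*_Z\Delta^d)$, whose induced automorphism of $\tilde V$ is exactly the corresponding $\chi(g)$.

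To finish, I would use that the relation $\bar T\rightrightarrows\bar Y$ is stratified, as recorded above, so that $R\cap(S_i\bar Y\times S_i\bar Y)$ is generated by elementary relations whose endpoints lie in $S_i\bar Y$; a chain $v_1=w_0,\dots,w_k=v_2$ realizing a pair $(v_1,v_2)\in R$ then lifts, step by step, to a composition of {\bf B}-birational self-maps of $Z$. Since {\bf B}-birational maps compose, once more via the spring isomorphism of \eqref{p-equ}(2) used to keep the source fixed, and form a group, this composite is a single $g\in{\rm Bir}(Z,{\rm Diff}^*_Z\Delta^d)$ with $\chi(g)(\tilde v_1)=\tilde v_2$. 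This places the displayed set inside $\cup_g\Gamma(\chi(g))$, as claimed; combined with the finiteness of the image of $\chi$ from \eqref{t-f-image}, it yields the finiteness of $R$ needed to invoke \eqref{t-quotient}.

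The step I expect to be the main obstacle is the functoriality of the source construction along the double locus: one must verify that the involution $\tau$ genuinely induces a {\bf B}-birational correspondence of sources, not merely an abstract identification of strata, which requires tracking the crepant log structure under adjunction from $(\bar X,\bar\Delta+\bar D)$ to $(D^n,\Theta)$ and checking that minimal lc centers of the double locus map, up to {\bf B}-equivalence, onto minimal lc centers of the ambient pair. A secondary difficulty is ensuring that the connecting chain can be taken within the stratum $S_i\bar Y$; this is exactly where stratifiability of the relation is essential, since otherwise the chain could descend into lower strata on which the source has smaller dimension and the rigidification against $Z$ would break down.
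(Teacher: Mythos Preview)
The paper does not give its own proof of this theorem: it is stated with attribution to \cite[33]{Kollar11b} and used as an input to the proof of \eqref{slc}. So there is no in-paper argument to compare your proposal against.

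That said, your sketch is a reasonable summary of the strategy in \cite{Kollar11b}. The essential mechanism---that the involution $\tau$ on $D^n$ together with adjunction transports sources to sources, and that Proposition~\ref{p-equ} lets one rigidify these against a fixed $(Z,{\rm Diff}^*_Z\Delta^d)$ so that a chain of elementary gluings composes into a single element of ${\rm Bir}(Z,{\rm Diff}^*_Z\Delta^d)$---is exactly Koll\'ar's argument. Your identification of the two delicate points (functoriality of the source under the double-locus involution, and keeping the chain inside the stratum) is accurate; in \cite{Kollar11b} the first is handled via the crepant adjunction formula and the notion of $\P^1$-linking of minimal lc centers, and the second via the stratified structure together with working over the generic point, so that one may assume the intermediate steps land in the open stratum. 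One small correction: your opening appeal to \cite[3.61]{Kollar11} is misplaced---that result is used later, in the proof of \eqref{slc}, to reduce the finiteness check on $R$ to generic points; the theorem you are proving is already a statement over $\bar\eta_{ij}$, so no such reduction is needed here. Also, the final sentence of your proposal (about finiteness of $R$ and invoking \eqref{t-quotient}) is not part of this theorem's proof but of the subsequent application.
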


\begin{proof}[Proof of \eqref{slc}]By \eqref{t-f-image}, we have that $\cup_g \Gamma(\chi(g))$ is finite over $\tilde{V}^0_{ij}\times \bar{\eta}_{ij}$, and so the hypotheses of \eqref{t-quotient} are satisfied. Thus there exists a quotient $Y$ 
of $\bar{T}\rightrightarrows \bar{Y}$. Following the proof of \cite[3.1]{HX11},  we see that there exists a line bundle $L$ on $Y$ whose pull back to $X$ is isomorphic to $\mathcal O _X(m(K_X+\Delta))$ for some integer $m>0$. Then it is easy to see that $L$ is relatively ample over $S$, which means that $K_X+\Delta$ is relatively semi-ample over $S$.\end{proof}

\begin{proof}[Proof of \eqref{c-lc}]The argument is the same as in \cite[7.4]{KMM94}. We include it here for reader's convenience. It follows from  \eqref{slc} that $m(K_T+\Delta _T):=m(K_X+\Delta)|_T$ is base point free over $S$ for any integer $m>0$ sufficiently divisible. By assumption (3), we may assume that 
the relative base locus of $m(K_X+\Delta )$ is contained in the support of $T$. We write
$$m(K_X+\Delta)-T=(m-1)(K_X+\Delta-\epsilon P)+K_X+\Delta-(T-(m-1)\epsilon P).$$ 
 Since $K_X+\Delta-\epsilon P$ is semi-ample over $S$, and $(X,\Delta-(T-(m-1)\epsilon P))$ is klt for  $0< \epsilon\ll 1$, 
by Koll\'ar's injectivity theorem (cf. \cite{Kollar86}),  we have that 
\begin{diagram}
R^1f_*\mathcal O_X(m(K_X+\Delta)-T)&\rTo^{T}& R^1f_*\mathcal O_X(m(K_X+\Delta)) 
 \end{diagram}
is an injection and hence $f_*\mathcal O_X(m(K_X+\Delta))\to f_*\mathcal O_T(m(K_T+\Delta_T))$ is surjective. 
We have a commutative diagram.
\begin{diagram}
f^*f_*\mathcal O_X(m(K_X+\Delta))&\rTo & f^*f_*\mathcal O_T(m(K_T+\Delta_T))\\
\dTo & & \dTo\\
\mathcal{O}_X(m(K_X+\Delta))&\rTo&\mathcal{O}_T(m(K_T+\Delta _T))
\end{diagram} 
Since the upper arrow and the right arrow are surjective,  $m(K_X+\Delta)$ is relatively globally generated along $T$ over $S$ and so  $m(K_X+\Delta)$ is relatively globally generated over $S$. 
\end{proof}

\begin{proof}[Proof of \eqref{lc}]By \eqref{slc}, we may assume that $X$ is irreducible, i.e., we only need to treat the case that $(X,\Delta)$ is log canonical.
 We may assume that $S$ is normal and dominated by $X$. By induction, we may assume that $\eqref{lc}$ is known for lower dimensions. Replacing $(X,\Delta)$ by a dlt modification, we may assume that $(X,\Delta)$ is $\Q$-factorial and dlt. Let $T=\lfloor \Delta  \rfloor$. Write $(K_X+\Delta)|_{T}=K_{T}+\Delta_T$, then $(T,\Delta _T)$ is a dslt pair. 

We run a $(K_{X}+\Delta-\epsilon T)$-MMP with scaling by an ample divisor $H$ for a small positive integer $\epsilon$.
If $T$ has a component which dominates $S$, then the MMP ends with a Fano contraction, and we can apply the same argument as in \cite{Gongyo10}. 
Otherwise, all components of $T$ are vertical over $S$.   
Since over the generic point of $S$, we have that $K_{X}+\Delta$ is klt, then we know that it is $\Q$-linearly equivalent to $0$ (cf. \cite[V, 4.9]{Nakayama04}). 
 
Therefore, $(X,\Delta-\epsilon T)$ is a klt pair, whose generic fiber has a good model.
By, \cite[1.1 and 2.9]{HX11}, we conclude that the MMP with scaling terminates with a good model $X'$ of $(X,\Delta-\epsilon T)$ over $S$. Since $X'$ is indeed a minimal model for all $(X,\Delta-\epsilon' T)$ with $0<\epsilon'<\epsilon$, which is  good again by \cite[1.1]{HX11}. 

The MMP sequence is $(K_X+\Delta)$-trivial, so $(X',\Delta')$ is a lc pair. We let $\mu:(X^d,\Delta ^d)\to (X',\Delta ')$ be a dlt model. Let $T'$ be the strict transform of $T$ on $X'$,
then $(X', \Delta ' -\epsilon T')$ is klt and the non-klt locus of $(X',\Delta ')$ is contained in ${\rm Supp }(T')$. Since each $\mu$-exceptional divisor has coefficient $1$ in $\Delta ^d$, it follows that $\lfloor \Delta ^d \rfloor ={\rm Supp}(\mu ^* T')$. Thus if $T^d=\mu ^* T'$, then $K_{X^d}+\Delta ^d-\epsilon T^d=\mu ^*(K_{X'}+\Delta '-\epsilon T')$ is semiample over $S$. By induction on the dimension, $K_{\Sigma} +\Delta_{\Sigma} $ is semiample over $S$ for all components $\Sigma $ of $ \lfloor \Delta^d \rfloor={\rm Supp}(T^d)$.
By \eqref{c-lc}, $K_{X^d}+\Delta^d$ is semi-ample over $S$ and hence so is $K_{X'}+\Delta'$. Since each step of the MMP is $(K_{X}+\Delta)$-trivial, this implies that $K_X+\Delta$ is semi-ample over $S$, i.e., $K_X+\Delta\sim_{\mathbb{Q},S}0$.  
\end{proof}

 \end{document}